\documentclass[12pt,a4paper,reqno]{amsart}
\usepackage{amsfonts}
\usepackage{booktabs}
\usepackage{amsmath,amssymb,amsthm,amsxtra}
\usepackage{float}
\usepackage{amssymb}
\usepackage{mathabx}
\usepackage{bbm}
\usepackage[colorlinks, linkcolor=purple,anchorcolor=Periwinkle,
citecolor=Red,urlcolor=Emerald]{hyperref}
\usepackage[usenames,dvipsnames]{xcolor}
\usepackage{enumitem}
\usepackage{geometry,array} 
\usepackage{fullpage}
\usepackage{graphicx}
\usepackage{subfigure}
\usepackage{bookmark}
\usepackage{tikz}\usetikzlibrary{matrix}
\usepackage{url}
\usepackage{dsfont}
\usepackage[colorinlistoftodos]{todonotes}
\usepackage{tablists} \restorelistitem

\usetikzlibrary{decorations.markings}
\tikzset{->-/.style={decoration={ markings,  mark=at position #1 with
    {\arrow{>}}},postaction={decorate}}}
\tikzset{-<-/.style={decoration={ markings,  mark=at position #1 with
    {\arrow{<}}},postaction={decorate}}}
\usepackage{extarrows}
\usepackage[all]{xy}
\usepackage{setspace}
\usepackage{thmtools}
\usepackage{thm-restate}
\usepackage{hyperref}
\usepackage{cleveref}
\usepackage{multirow}
\usepackage{mathtools}
\usepackage{autobreak}
\usetikzlibrary{positioning}
\usepackage{xcolor}
\usepackage{graphicx}
\usetikzlibrary{decorations.pathreplacing}
\usetikzlibrary{calc}

\newcommand{\mfe}{\mathbf{e}}

\newcommand{\mfw}{\mathbf{w}}
\newcommand{\mfx}{\mathbf{x}}

\newcommand{\mcE}{\mathcal{E}}

\newcommand{\mcH}{\mathcal{H}}

\newcommand{\mcL}{\mathcal{L}}
\newcommand{\mcM}{\mathcal{M}}

\newcommand{\mbN}{\mathbb{N}}

\newcommand{\mbR}{\mathbb{R}}

\newcommand{\mbT}{\mathbb{T}}

\newcommand{\mbZ}{\mathbb{Z}}

\theoremstyle{plain}
\newtheorem{theorem}{Theorem}[section]

\newtheorem{lemma}[theorem]{Lemma}
\newtheorem{corollary}[theorem]{Corollary}
\newtheorem{proposition}[theorem]{Proposition}
\newtheorem{conjecture}[theorem]{Conjecture}
\theoremstyle{definition}
\newtheorem{definition}[theorem]{Definition}

\newtheorem{example}[theorem]{Example}

\newtheorem{remark}[theorem]{Remark}

\newtheorem{notations}[theorem]{Notations}
\numberwithin{equation}{section}
\newtheorem{definition-proposition}[theorem]{Definition-Proposition}

\newtheorem{assumption}[theorem]{Assumption}

\newtheorem{observation}[theorem]{Observation}
\newenvironment{definitionx}
  {
   \pushQED{\qed}\begin{definition}}
  {\popQED\end{definition}}

\newenvironment{remarkx}
  {
   \pushQED{\qed}\begin{remark}}
  {\popQED\end{remark}}

\begin{document}

\title{The logarithmic asymptotic phenomenon for generalized Markov-Hurwitz equations}

\date{\today}

\author{Zhichao Chen}
\address{School of Mathematical Sciences\\ University of Science and Technology of China \\ Hefei, Anhui 230026, P. R. China.}
\email{czc98@mail.ustc.edu.cn}

\author{Zelin Jia}
\address{Graduate School of Mathematics\\ Nagoya University\\Chikusa-ku\\ Nagoya\\464-8601\\ Japan.}
\email{zelin.jia.c0@math.nagoya-u.ac.jp}

\author{Wenchao Wu}
\address{School of Mathematical Sciences\\ University of Science and Technology of China \\ Hefei, Anhui 230026, P. R. China.}
\email{wuwch20@mail.ustc.edu.cn}

\begin{abstract} 
  The purpose of this paper is twofold. First, we introduce a family of generalized Markov-Hurwitz equations, extending classical Markov-Hurwitz equations \cite{Hur07} with additional degree $n-1$ interaction terms, Gyoda and Matsushita's generalized Markov equations \cite{GM23a} from $3$ variables to $n$ variables. 
  Second, we prove a logarithmic asymptotic phenomenon for the positive integer solutions of these equations, which may be viewed as a generalization of the results in \cite{CJ25}.
\end{abstract}
\maketitle
\tableofcontents
\newpage
\section{Introduction \& Preliminaries}\label{sec:intro-prelim}

\subsection{Introduction}

The classical Markov equation \cite{Mar80}
\[
  X_1^2+X_2^2+X_3^2=3X_1X_2X_3
\]
is one of the basic examples in which Diophantine equations, cluster algebras, and tree-like dynamics meet. 
Its higher-dimensional analogue is the Markov-Hurwitz equation \cite{Hur07}
\[
  X_1^2+\cdots+X_n^2=nX_1\cdots X_n,
\]
whose positive integer solutions are generated from the fundamental solution $(1,\dots,1)$ by Vieta involutions. The asymptotic growth of these solutions and of the associated mutation trees has been studied from several viewpoints; see, for instance, \cite{Bar94,GMR19,SV17}.

The purpose of this paper is to introduce and study the following family of generalized Markov-Hurwitz equations:
\[
  \sum_{i=1}^n X_i^2+\sum_{i=1}^n \lambda_i
  X_1\cdots \widehat{X_i}\cdots X_n
  =
  \left(n+\sum_{i=1}^n\lambda_i\right)\prod_{i=1}^n X_i,
\]
where $\lambda_i\in\mbZ_{\geq 0}$. 

In Section 2,
we define mutations for the above equation by replacing one coordinate with the other Vieta root. 
We prove that every positive integer solution is obtained from $(1,\dots,1)$ by a finite sequence of such mutations.

In Section 3 and Section 4,
we study the asymptotic behavior of this tree. 
Our observation is that, after taking logarithms, the mutation rule for generalized Markov-Hurwitz solutions becomes asymptotically additive.

More precisely, we compare the Euclid tree with the logarithmic generalized Markov-Hurwitz tree.
We show that the logarithmic generalized Markov-Hurwitz chain is asymptotic, up to a scalar multiple, to the corresponding classical $n$-branched Euclid chain. 
This generalizes the logarithmic asymptotic phenomenon for the three-variable case studied in \cite{CJ25}.

Historically, this connection goes back to the study of the growth of Markov-Hurwitz solutions. 
Zagier related the counting problem of Markov tree to an asymptotic counting problem for Euclid tree in his study of Markov numbers \cite{Zag82}, and Baragar extended this point of view to Markov-Hurwitz equations, where multi-branched Euclid trees again, play a central role in describing asymptotic growth \cite{Bar94}. 

In this and the previous paper \cite{CJ25}, still relating to the Euclid tree, we give another point of view by comparing logarithmic generalized Markov-Hurwitz tree directly with deformed Euclid tree at infinity.

\subsection{Preliminaries \& Notations}

Throughout the paper, we assume $n\geq 3$ and unless otherwise specified, a solution of a Diophantine equation means a positive integer solution. 

We denote by $\mbT_n$ the infinite rooted tree whose root has $n$ children, and in which every other node has one parent and $n-1$ children.
For instance, the following diagram shows the first two levels of $\mbT_4$; the dotted edges indicate that the same branching process continues indefinitely.

\begin{center}
\begin{tikzpicture}[
  dot/.style={circle, fill=black, inner sep=1.4pt},
  cont/.style={draw=none, fill=none, font=\scriptsize, inner sep=0pt},
  branchlabel/.style={fill=white, inner sep=0.8pt, font=\scriptsize},
  edge/.style={line width=0.35pt}
]
  \node[dot] (r) at (0,0) {};
  \foreach \x/\i/\a/\b/\c in {-4.5/1/2/3/4,-1.5/2/1/3/4,1.5/3/1/2/4,4.5/4/1/2/3} {
    \coordinate (v\i) at (\x,-1.1);
    \node[dot] at (v\i) {};
    \draw[edge] (r) -- node[pos=0.55, branchlabel] {$\i$} (v\i);
    \foreach \dx/\j/\ell in {-0.6/1/\a,0/2/\b,0.6/3/\c} {
      \coordinate (v\i\j) at ($(v\i)+(\dx,-1.1)$);
      \node[dot] at (v\i\j) {};
      \draw[edge] (v\i) -- node[pos=0.55, branchlabel, font=\tiny] {$\ell$} (v\i\j);
      \draw[edge, densely dotted] (v\i\j) -- ++(0,-0.35);
      \node[cont] at ($(v\i\j)+(0,-0.58)$) {$\vdots$};
    }
  }
  \node[cont, font=\small] at (0,-3.45) {$\mbT_4$};
\end{tikzpicture}
\end{center}

As shown in the above diagram, for the tree $\mbT_n$, we will label its branches under the following rule:

We label the branches from the root, from left to right, by $1,2,\dots,n$. At every other node, if the incoming branch has label $i$, then we label the outgoing branches, from left to right, by $1,2,\dots,\hat{i},\dots,n$.
Thus an infinite path in $\mbT_n$ can be represented by a sequence $\mfw=[w_1,w_2,\dots]$, where $1 \leq w_t \leq n$ for every $t\geq 1$, and $w_{t+1}\neq w_t$ for every $t\geq 1$.

To each node of $\mbT_n$, we associate a point in $\mbN^n$. Each branch connecting two nodes is then interpreted as a mutation between the two corresponding points.
In this paper, the mutation associated with each branch label is fixed; that is, for each label in $1,2,\dots,n$, we fix a map from $\mbN^n$ to $\mbN^n$.

\begin{definitionx}
  The \emph{classical $n$-branched Euclid tree} $\mcE_n$ is the mutation tree on $\mbT_n$ defined recursively as follows. The root is assigned the initial point $(1,1,\dots,1)\in\mbN^n$. For each label $i\in\{1,2,\dots,n\}$, define the mutation
  \begin{align*}
    \mcM_i(x_1,x_2,\dots,x_n)
    =
    (x_1,\dots,x_{i-1},x_1+\cdots+\widehat{x_i}+\cdots+x_n,x_{i+1},\dots,x_n).
  \end{align*}
  If a node is assigned the point $\mfx\in\mbN^n$, then the endpoint of its outgoing branch labeled $i$ is assigned the point $\mcM_i(\mfx)$.
\end{definitionx}

For example, the first two levels of the classical $4$-branched Euclid tree are shown in Figure~\ref{fig:classical-euclid-tree-n4}.

\begin{figure}[htbp]
\centering
\setcounter{figure}{-1}
\resizebox{0.95\textwidth}{!}{%
\begin{tikzpicture}[
  point/.style={draw=none, fill=none, inner sep=1pt, font=\scriptsize},
  leafpoint/.style={draw=none, fill=none, inner sep=0.8pt, font=\tiny},
  branchlabel/.style={fill=white, inner sep=0.8pt, font=\scriptsize},
  cont/.style={draw=none, fill=none, font=\scriptsize, inner sep=0pt},
  edge/.style={line width=0.35pt}
]
  \node[point] (r) at (0,0) {$(1,1,1,1)$};

  \node[point] (v1) at (-6,-1.35) {$(3,1,1,1)$};
  \node[point] (v2) at (-2,-1.35) {$(1,3,1,1)$};
  \node[point] (v3) at (2,-1.35) {$(1,1,3,1)$};
  \node[point] (v4) at (6,-1.35) {$(1,1,1,3)$};

  \draw[edge] (r) -- node[pos=0.48, branchlabel] {$1$} (v1);
  \draw[edge] (r) -- node[pos=0.48, branchlabel] {$2$} (v2);
  \draw[edge] (r) -- node[pos=0.48, branchlabel] {$3$} (v3);
  \draw[edge] (r) -- node[pos=0.48, branchlabel] {$4$} (v4);

  \node[leafpoint] (v12) at (-7.3,-2.75) {$(3,5,1,1)$};
  \node[leafpoint] (v13) at (-6,-2.75) {$(3,1,5,1)$};
  \node[leafpoint] (v14) at (-4.7,-2.75) {$(3,1,1,5)$};

  \node[leafpoint] (v21) at (-3.3,-2.75) {$(5,3,1,1)$};
  \node[leafpoint] (v23) at (-2,-2.75) {$(1,3,5,1)$};
  \node[leafpoint] (v24) at (-0.7,-2.75) {$(1,3,1,5)$};

  \node[leafpoint] (v31) at (0.7,-2.75) {$(5,1,3,1)$};
  \node[leafpoint] (v32) at (2,-2.75) {$(1,5,3,1)$};
  \node[leafpoint] (v34) at (3.3,-2.75) {$(1,1,3,5)$};

  \node[leafpoint] (v41) at (4.7,-2.75) {$(5,1,1,3)$};
  \node[leafpoint] (v42) at (6,-2.75) {$(1,5,1,3)$};
  \node[leafpoint] (v43) at (7.3,-2.75) {$(1,1,5,3)$};

  \draw[edge] (v1) -- node[pos=0.5, branchlabel, font=\tiny] {$2$} (v12);
  \draw[edge] (v1) -- node[pos=0.5, branchlabel, font=\tiny] {$3$} (v13);
  \draw[edge] (v1) -- node[pos=0.5, branchlabel, font=\tiny] {$4$} (v14);

  \draw[edge] (v2) -- node[pos=0.5, branchlabel, font=\tiny] {$1$} (v21);
  \draw[edge] (v2) -- node[pos=0.5, branchlabel, font=\tiny] {$3$} (v23);
  \draw[edge] (v2) -- node[pos=0.5, branchlabel, font=\tiny] {$4$} (v24);

  \draw[edge] (v3) -- node[pos=0.5, branchlabel, font=\tiny] {$1$} (v31);
  \draw[edge] (v3) -- node[pos=0.5, branchlabel, font=\tiny] {$2$} (v32);
  \draw[edge] (v3) -- node[pos=0.5, branchlabel, font=\tiny] {$4$} (v34);

  \draw[edge] (v4) -- node[pos=0.5, branchlabel, font=\tiny] {$1$} (v41);
  \draw[edge] (v4) -- node[pos=0.5, branchlabel, font=\tiny] {$2$} (v42);
  \draw[edge] (v4) -- node[pos=0.5, branchlabel, font=\tiny] {$3$} (v43);

  \foreach \v in {12,13,14,21,23,24,31,32,34,41,42,43} {
    \draw[edge, densely dotted] (v\v) -- ++(0,-0.35);
    \node[cont] at ($(v\v)+(0,-0.58)$) {$\vdots$};
  }

\end{tikzpicture}%
}
\caption{The first two levels of the classical $4$-branched Euclid tree $\mcE_4$.}
\label{fig:classical-euclid-tree-n4}
\end{figure}

\subsection{Main results}

Here we list all the results we have obtained in this paper.
First, let us define another tree structure associated to $\mbT_n$.

\begin{definitionx}
  Fix $\lambda=(\lambda_1,\lambda_2,\dots,\lambda_n)\in \mbZ_{\geq 0}^n$.
  The \emph{generalized Markov-Hurwitz tree} $\mcH_{n,\lambda}$ is the mutation tree on $\mbT_n$ defined recursively as follows.
  The root is assigned the initial point $(1,1,\dots,1)\in \mbN^n$.
  For each label $i\in\{1,2,\dots,n\}$, define the mutation
  \[
    \mu_i(x_1,x_2,\dots,x_n)
    =
    (x_1,\dots,x_{i-1},x_i',x_{i+1},\dots,x_n),
  \]
  where
  \[
    x_i'
    =
    \frac{\displaystyle \sum_{j\neq i}x_j^2+\lambda_i\prod_{j\neq i}x_j}{x_i}.
  \]
	Indeed, from \Cref{lem_mu}, we know that $x_i'$ will always be an integer.
  If a node is assigned the point $\mfx\in\mbN^n$, then the endpoint of its outgoing branch labeled $i$ is assigned the point $\mu_i(\mfx)$.
  Thus, all the points in this tree are integer points.
\end{definitionx}

Take $n=4$ and $\lambda=(\lambda_1,\lambda_2,\lambda_3,\lambda_4)=(0,1,2,3)$.
Then the first two levels of the generalized Markov-Hurwitz tree $\mcH_{4,\lambda}$, starting from $(1,1,1,1)$, are shown in Figure~\ref{fig:mutation-tree-n4-two-steps}.

\begin{figure}[htbp]
\centering
\resizebox{0.95\textwidth}{!}{%
\begin{tikzpicture}[
  point/.style={draw=none, fill=none, inner sep=1pt, font=\scriptsize},
  leafpoint/.style={draw=none, fill=none, inner sep=1pt, font=\tiny},
  branchlabel/.style={fill=white, inner sep=0.8pt, font=\scriptsize},
  cont/.style={draw=none, fill=none, font=\scriptsize, inner sep=0pt},
  edge/.style={line width=0.35pt}
]
  \node[point] (r) at (0,0) {$(1,1,1,1)$};

  \node[point] (v1) at (-6,-1.35) {$(3,1,1,1)$};
  \node[point] (v2) at (-2,-1.35) {$(1,4,1,1)$};
  \node[point] (v3) at (2,-1.35) {$(1,1,5,1)$};
  \node[point] (v4) at (6,-1.35) {$(1,1,1,6)$};

  \draw[edge] (r) -- node[pos=0.48, branchlabel] {$1$} (v1);
  \draw[edge] (r) -- node[pos=0.48, branchlabel] {$2$} (v2);
  \draw[edge] (r) -- node[pos=0.48, branchlabel] {$3$} (v3);
  \draw[edge] (r) -- node[pos=0.48, branchlabel] {$4$} (v4);

  \node[leafpoint] (v12) at (-7.3,-2.75) {$(3,14,1,1)$};
  \node[leafpoint] (v13) at (-6,-2.75) {$(3,1,17,1)$};
  \node[leafpoint] (v14) at (-4.7,-2.75) {$(3,1,1,20)$};

  \node[leafpoint] (v21) at (-3.3,-2.75) {$(18,4,1,1)$};
  \node[leafpoint] (v23) at (-2,-2.75) {$(1,4,26,1)$};
  \node[leafpoint] (v24) at (-0.7,-2.75) {$(1,4,1,30)$};

  \node[leafpoint] (v31) at (0.7,-2.75) {$(27,1,5,1)$};
  \node[leafpoint] (v32) at (2,-2.75) {$(1,32,5,1)$};
  \node[leafpoint] (v34) at (3.3,-2.75) {$(1,1,5,42)$};

  \node[leafpoint] (v41) at (4.7,-2.75) {$(38,1,1,6)$};
  \node[leafpoint] (v42) at (6,-2.75) {$(1,44,1,6)$};
  \node[leafpoint] (v43) at (7.3,-2.75) {$(1,1,50,6)$};

  \draw[edge] (v1) -- node[pos=0.5, branchlabel, font=\tiny] {$2$} (v12);
  \draw[edge] (v1) -- node[pos=0.5, branchlabel, font=\tiny] {$3$} (v13);
  \draw[edge] (v1) -- node[pos=0.5, branchlabel, font=\tiny] {$4$} (v14);

  \draw[edge] (v2) -- node[pos=0.5, branchlabel, font=\tiny] {$1$} (v21);
  \draw[edge] (v2) -- node[pos=0.5, branchlabel, font=\tiny] {$3$} (v23);
  \draw[edge] (v2) -- node[pos=0.5, branchlabel, font=\tiny] {$4$} (v24);

  \draw[edge] (v3) -- node[pos=0.5, branchlabel, font=\tiny] {$1$} (v31);
  \draw[edge] (v3) -- node[pos=0.5, branchlabel, font=\tiny] {$2$} (v32);
  \draw[edge] (v3) -- node[pos=0.5, branchlabel, font=\tiny] {$4$} (v34);

  \draw[edge] (v4) -- node[pos=0.5, branchlabel, font=\tiny] {$1$} (v41);
  \draw[edge] (v4) -- node[pos=0.5, branchlabel, font=\tiny] {$2$} (v42);
  \draw[edge] (v4) -- node[pos=0.5, branchlabel, font=\tiny] {$3$} (v43);

  \foreach \v in {12,13,14,21,23,24,31,32,34,41,42,43} {
    \draw[edge, densely dotted] (v\v) -- ++(0,-0.35);
    \node[cont] at ($(v\v)+(0,-0.58)$) {$\vdots$};
  }

\end{tikzpicture}%
}
\caption{The first two levels of the generalized Markov-Hurwitz tree $\mcH_{4,\lambda}$ for $\lambda=(0,1,2,3)$.}
\label{fig:mutation-tree-n4-two-steps}
\end{figure}

Here is our first main result:

\begin{theorem}[\Cref{thm: generate}]
  With the notation of Definition 1.2, every positive integer solution of the generalized Markov-Hurwitz equation (\ref{eq: generalized Markov Hurwitz equations}) appears as a node of the generalized Markov-Hurwitz tree $\mcH_{n,\lambda}$. More precisely, for any positive integer solution $(x_1,x_2,\dots,x_n)$, there exists a finite path $\mfw=[w_1,\dots,w_t]$ in $\mbT_n$ such that
  \[
    (x_1,x_2,\dots,x_n)
    =
    (\mu_{w_t}\circ\cdots\circ\mu_{w_1})(1,1,\dots,1).
  \]
\end{theorem}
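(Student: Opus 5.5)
The proof is a Vieta-jumping descent: starting from an arbitrary solution, we repeatedly mutate the largest coordinate and show that this strictly decreases the coordinate sum until we reach $(1,\dots,1)$. Since each $\mu_i$ is an involution, the descent path reversed gives the required path $\mfw$ in $\mbT_n$.

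\textbf{Setup.} Fix a solution $\mfx=(x_1,\dots,x_n)$ and write $K=n+\sum_j\lambda_j$. Regard the equation as a quadratic in $X_i$:
\[
  X_i^2-\Bigl(K\prod_{j\neq i}x_j-\sum_{j\neq i}\lambda_j\prod_{k\neq i,j}x_k\Bigr)X_i+\Bigl(\sum_{j\neq i}x_j^2+\lambda_i\prod_{j\neq i}x_j\Bigr)=0 .
\]
One root is $x_i$. By Vieta, the other root is $x_i'$, which is a positive integer: it is an integer by the sum of the roots and positive by the product. This is consistent with \Cref{lem_mu}. So $\mu_i(\mfx)$ is again a positive integer solution, and $\mu_i\circ\mu_i=\mathrm{id}$ on solutions. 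Finally, $(1,\dots,1)$ is itself a solution, since both sides equal $n+\sum\lambda_i$.

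\textbf{Descent.} Order the coordinates so that $x_1\geq x_2\geq\cdots\geq x_n$. Assume $\mfx\neq(1,\dots,1)$; then $x_1\geq 2$, because if $x_1=1$ all coordinates are $1$. We aim to show $x_1'<x_1$, which is equivalent to
\[
  x_1^2>\sum_{j\ge2}x_j^2+\lambda_1\prod_{j\ge2}x_j .
\]
Suppose instead that $x_1^2\le \sum_{j\ge2}x_j^2+\lambda_1\prod_{j\ge2}x_j$. Substitute this into the equation to bound
\[
  K\prod_j x_j\le 2\sum_{j\ge2}x_j^2+2\lambda_1\prod_{j\ge2}x_j+\sum_{i\ge2}\lambda_i\prod_{j\neq i}x_j,
\]
and then divide by $\prod_j x_j$.
\begin{itemize}
\item Each term $\lambda_i\prod_{j\ne i}x_j/\prod_j x_j=\lambda_i/x_i\le\lambda_i$.
\item The term $2\lambda_1/x_1\le\lambda_1$, since $x_1\ge2$.
\item Each term $2x_j^2/\prod_k x_k\le 2x_j/(x_1\prod_{k\ne1,j}x_k)\le 2/\prod_{k\ne 1,j}x_k$, using $x_j\le x_1$.
\end{itemize}
This gives
\[
  n+\sum\lambda_i\le 2\sum_{j\ge2}\frac{1}{\prod_{k\neq1,j}x_k}+\sum\lambda_i,
\]
so $n\le 2(n-1)$, which is not yet a contradiction.

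\textbf{Main obstacle: sharpening the descent inequality.} The crude bound above is too weak and must be tightened. The approach I would try first is to treat the quadratic in $X_1$ directly: $f(X)=X^2-BX+C$ with $f(x_1)=0$. Then $x_1'<x_1$ if and only if $x_1>B/2$, i.e.\ iff $f(x_2)<0$ when $x_2<B/2$; we know the larger root of $f$ is $\ge x_1$.

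So it suffices to show $f(x_2)<0$ whenever $x_1$ is strictly the largest. This amounts to
\[
  x_2^2+\sum_{j\ge2}x_j^2+\lambda_1\prod_{j\ge2}x_j+\sum_{j\ge2}\lambda_j\,x_2\prod_{k\ne1,j}x_k<Kx_2\prod_{j\ge2}x_j .
\]
Dividing by $x_2\prod_{j\ge2}x_j$:
\begin{itemize}
\item the $\lambda_1$ term contributes $\lambda_1/x_2\le\lambda_1$;
\item each $\lambda_j$ term contributes $\lambda_j/x_j\le\lambda_j$;
\item the square terms give $\bigl(x_2^2+\sum_{j\ge2}x_j^2\bigr)/(x_2\prod_{j\ge2}x_j)\le n/\prod_{j\ge3}x_j\le n$, using $x_j\le x_2$.
\end{itemize}
Hence $f(x_2)\le0$, with equality only in the degenerate case where all $x_j=1$ for $j\geq 2$.

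That degenerate case needs a separate check: $x_2=\cdots=x_n=1$ and $x_1\ge2$. Here $f(1)=0$, so $x_1'=1<x_1$ directly, and the descent is immediate. Ties where $x_1=x_2$ are also handled by this inequality: strict $f(x_2)<0$ forces the roots to satisfy $x_1'<x_2\le x_1$, while $f(x_2)=0$ would give $x_1=x_2=1$, which is the root solution.

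\textbf{Conclusion.} Each step $\mfx\mapsto\mu_{i}(\mfx)$, with $i$ the index of a largest coordinate, strictly decreases $\sum x_j$ and stays inside the set of positive integer solutions. Therefore the process terminates, and it can only terminate at $(1,\dots,1)$. The labels of consecutive steps differ, because after mutating index $i$ the new $x_i'$ is no longer the unique maximum, and we choose the next index among the maximal ones other than $i$. Reversing the sequence of labels therefore yields a valid path $\mfw$ in $\mbT_n$, and involutivity of the $\mu_i$ gives the stated identity.
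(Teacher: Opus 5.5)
Your argument is correct and is essentially the paper's proof: mutate at a largest coordinate, show that the quadratic in that coordinate is $\le 0$ at the second-largest coordinate (the computation in \Cref{lem_posi}), conclude by descent, and reverse the path using $\mu_i^2=\mathrm{id}$. Your equality case is a bit more direct than the paper's two-case analysis of the reduced equation, since you use $n\ge 3$ to force $x_2=\cdots=x_n=1$ and then read off $x_1'=1$ from $f(1)=0$; you also check that consecutive labels differ, so the reversed sequence is a genuine path in $\mbT_n$, which the paper leaves implicit.
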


To study the asymptotic behavior of the mutation dynamics, we apply the coordinatewise logarithm to the generalized Markov-Hurwitz tree.

\begin{definitionx}
  With the notation of Definition 1.2, define $\widebar{x}\coloneqq \log x$ for any positive real number $x$, and write
  \[
    \widebar{\mfx}
    =
    (\widebar{x_1},\widebar{x_2},\dots,\widebar{x_n})
    =
    (\log x_1,\log x_2,\dots,\log x_n)
  \]
  for $\mfx=(x_1,x_2,\dots,x_n)\in\mbN^n$.
  The \emph{logarithmic generalized Markov-Hurwitz tree} $\widebar{\mcH}_{n,\lambda}$ is the tree obtained from $\mcH_{n,\lambda}$ by replacing each node $\mfx$ by $\widebar{\mfx}$.
  
  Thus, along a path $\mfw=[w_1,w_2,\dots]$ in $\mbT_n$, the logarithmic generalized Markov-Hurwitz chain is obtained by taking the logarithm of each coordinate in the corresponding chain in $\mcH_{n,\lambda}$.
\end{definitionx}

Now, we can introduce our second main result:

\begin{theorem}[\Cref{thm: logarithmic asymptotic}]
  With the notation above, let $\mfw=[w_1,w_2,\dots]\in\mbT_n$ be a generic mutation sequence.
  Then there exists a real number $q$ such that the logarithmic generalized Markov-Hurwitz chain in $\widebar{\mcH}_{n,\lambda}$ along $\mfw$ is asymptotic to $q$ times the corresponding classical $n$-branched Euclid chain in $\mcE_n$ along $\mfw$.
\end{theorem}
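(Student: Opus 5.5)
The plan is to compare the two chains after a constant shift of coordinates. Along $\mfw$, write $\mfx^{(t)}=(\mu_{w_t}\circ\cdots\circ\mu_{w_1})(1,\dots,1)$ and $\mfe^{(t)}=(\mcM_{w_t}\circ\cdots\circ\mcM_{w_1})(1,\dots,1)$. Put $\Lambda=\sum_k\lambda_k$ and $P_i=\prod_{j\neq i}x_j$. I expect the constant to be $q=\log(n+\Lambda)/(n-2)$.

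\textbf{Step 1: the log mutation is additive up to a small error.} Regard the generalized Markov-Hurwitz equation as a quadratic in $X_i$. Vieta's formula then gives
\[
x_i+x_i'=(n+\Lambda)P_i-\sum_{k\neq i}\lambda_k\,\frac{P_i}{x_k}.
\]
Hence $\widebar{x_i'}=\log(n+\Lambda)+\sum_{j\neq i}\widebar{x_j}+\varepsilon$, where
\[
\varepsilon=\log\Bigl(1-\frac{x_i}{(n+\Lambda)P_i}-\sum_{k\neq i}\frac{\lambda_k}{(n+\Lambda)x_k}\Bigr).
\]
I will use the ordering facts from the descent argument in the proof of \Cref{thm: generate}. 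After the first few steps, the coordinate being mutated is the smallest one, and all coordinates other than it are large. Consequently $\varepsilon_t\to0$ as $t\to\infty$.

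\textbf{Step 2: shift to remove the constant.} Set $\mathbf y^{(t)}=\widebar{\mfx}^{(t)}+q(1,\dots,1)$ with $q(n-2)=\log(n+\Lambda)$. A direct computation gives
\[
\mathbf y^{(t)}=\mcM_{w_t}(\mathbf y^{(t-1)})+\varepsilon_t\,\mfe_{w_t},
\]
where $\mfe_i$ is the $i$-th standard basis vector, and $\mathbf y^{(0)}=q(1,\dots,1)$. Since $\mcM_i$ is linear, this unrolls to
\[
\mathbf y^{(t)}=q\,\mfe^{(t)}+\sum_{s\le t}\varepsilon_s\,\mcM_{w_t}\cdots\mcM_{w_{s+1}}\mfe_{w_s}.
\]

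\textbf{Step 3: bound the accumulated error.} Every $\mcM_i$ is a nonnegative matrix. Moreover, $\mfe_{w_s}$ is dominated by $\mfe^{(s)}/\min_j e^{(s)}_j$. Therefore the $s$-th error term is at most $\bigl(|\varepsilon_s|/\min_j e_j^{(s)}\bigr)\,\mfe^{(t)}$ coordinatewise. This gives
\[
\bigl|\mathbf y^{(t)}-q\,\mfe^{(t)}\bigr|\le\Bigl(\sum_{s\le t}\frac{|\varepsilon_s|}{\min_j e^{(s)}_j}\Bigr)\mfe^{(t)}.
\]
If the tail of this sum is small compared with $1$, a standard split gives $\widebar{x}^{(t)}_j/(q\,e^{(t)}_j)\to1$ for every $j$. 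The argument splits the sum into $s\le S$, a fixed finite part killed by the growth of $\mfe^{(t)}$, and $s>S$, whose contribution is small. It also uses that the shift $q(1,\dots,1)$ is negligible.

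\textbf{Main obstacle.} The key point is the summability of the weighted errors $|\varepsilon_s|/\min_j e^{(s)}_j$ in Step 3. Here the genericity of $\mfw$ must be used. Roughly, every label should recur, so that $\min_j e^{(s)}_j\to\infty$ and all $x_j\to\infty$. Then $|\varepsilon_s|=O\bigl(1/\min_{k\neq w_s}x_k^{(s-1)}\bigr)$ decays exponentially in $\min_j e_j$, because the $x$'s grow like $\exp(q\,e)$. First I would prove a crude a priori bound $\widebar{x}^{(t)}_j\ge c\,e^{(t)}_j$ by induction, using only $x_i'\ge P_i$. From this the needed decay of $\varepsilon_s$ follows. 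For nongeneric paths, such as those eventually alternating between two labels, some coordinate stays bounded and the argument must be excluded. This explains the hypothesis in \Cref{thm: logarithmic asymptotic}.
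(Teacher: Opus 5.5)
Your Steps 1 and 2 are fine. The shift by $\frac{\log(n+\Lambda)}{n-2}(1,\dots,1)$ is the same device as the paper's auxiliary sequence $u_j=l_j+\frac{k}{(n-2)x_j}$. Step 3, however, is wrong, and so is the predicted value of $q$.

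Put $v_s^{(t)}:=\mcM_{w_t}\cdots\mcM_{w_{s+1}}\mfe_{w_s}$. The finitely many early terms are \emph{not} killed by the growth of $\mfe^{(t)}$, because $v_s^{(t)}$ is itself a classical Euclid chain along the same path and grows at the same rate. Precisely, suppose every label has occurred after time $s$ by time $r$. Then all coordinates of $v_s^{(r)}$ are $\ge 1$, so $v_s^{(r)}\ge\mfe^{(r)}/M$ with $M=\max_j e^{(r)}_j$. Since the $\mcM_i$ are nonnegative, $v_s^{(t)}\ge\mfe^{(t)}/M$ for all $t\ge r$.

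Moreover, every $\varepsilon_s$ is strictly negative, since the ratio number satisfies $k_s<n+\Lambda$ (the subtracted term in the paper's formula for $k$ is positive). For example, $\varepsilon_1=\log\frac{n-1+\lambda_{w_1}}{n+\Lambda}$. Keeping only the $s=1$ term in your unrolled formula gives $\limsup_t\widebar{x}^{(t)}_j/e^{(t)}_j\le\frac{\log(n+\Lambda)}{n-2}-\frac{|\varepsilon_1|}{M}$. So your $q$ is wrong for every generic path and every $\lambda$.

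Concretely, take $n=3$, $\lambda=0$ and $\mfw=[1,2,3,1,2,3,\dots]$. The ratios are $\log(433)/8\approx0.76$, $\log(37666)/13\approx0.81$, $\log(48928105)/21\approx0.84,\dots$, and they tend to about $0.895$, not to $\log 3\approx1.10$. Your Step 3 inequality can at best show that these ratios stay bounded.

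What is actually true is $q=\frac{\log(n+\Lambda)}{n-2}+\sum_{s\ge1}\varepsilon_s\rho_s$ with $\rho_s=\lim_t v^{(t)}_{s,j}/e^{(t)}_j$. The missing idea is that these limits exist and are independent of $j$. In other words, two classical Euclid chains along the same generic path, started from different nonnegative vectors (here $\mfe_{w_s}$ and $\mfe^{(s)}$), must become asymptotically proportional with one common ratio in all coordinates.

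This is the core of the paper's argument. Its combinatorial proposition shows that for $k=0$ the total interval of the comparison points shrinks to a point. This is transferred to the $\{\log k_i\}$-deformed chain by the shift above together with boundedness of the comparison points; the paper never computes $q$. Your proposal uses genericity only to get $\varepsilon_s\to0$, which cannot yield this proportionality.

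Conversely, the summability you call the main obstacle is not needed. It can even fail along generic paths containing very long runs that avoid one label: for $n=3$, $\lambda=0$, alternating $1,2$ with $x_3=c$ fixed gives $|\varepsilon_s|\approx 1/(9c^2)$ at every step while $\min_j e^{(s)}_j$ stays constant. The tail is controlled instead by the identity $\sum_{s\le t}v_s^{(t)}=\frac{1}{n-2}\bigl(\mfe^{(t)}-(1,\dots,1)\bigr)$, which is your own shift applied to the constant deformation $k=1$. It bounds the contribution of all $s>S$, relative to $e^{(t)}_j$, by $\sup_{s>S}|\varepsilon_s|/(n-2)$.

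So once the proportionality lemma is added, $\varepsilon_s\to0$ suffices, and your Steps 1--2 become a complete proof that also gives a formula for $q$. A minor point: for $n\ge4$ the mutated coordinate need not be the smallest. What you need, and what holds, is that it is not the largest.
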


The following diagram illustrates the logarithmic version of Figure~\ref{fig:mutation-tree-n4-two-steps}. Each coordinate is replaced by its natural logarithm and rounded to two decimal places.

\begin{figure}[htbp]
\centering
\resizebox{0.98\textwidth}{!}{%
\begin{tikzpicture}[
  point/.style={draw=none, fill=none, inner sep=1pt, font=\scriptsize},
  leafpoint/.style={draw=none, fill=none, inner sep=1pt, font=\tiny},
  branchlabel/.style={fill=white, inner sep=0.8pt, font=\scriptsize},
  cont/.style={draw=none, fill=none, font=\scriptsize, inner sep=0pt},
  edge/.style={line width=0.35pt}
]
  \node[point] (r) at (0,0) {$(0.00,0.00,0.00,0.00)$};

  \node[point] (v1) at (-6,-1.35) {$(1.10,0.00,0.00,0.00)$};
  \node[point] (v2) at (-2,-1.35) {$(0.00,1.39,0.00,0.00)$};
  \node[point] (v3) at (2,-1.35) {$(0.00,0.00,1.61,0.00)$};
  \node[point] (v4) at (6,-1.35) {$(0.00,0.00,0.00,1.79)$};

  \draw[edge] (r) -- node[pos=0.48, branchlabel] {$1$} (v1);
  \draw[edge] (r) -- node[pos=0.48, branchlabel] {$2$} (v2);
  \draw[edge] (r) -- node[pos=0.48, branchlabel] {$3$} (v3);
  \draw[edge] (r) -- node[pos=0.48, branchlabel] {$4$} (v4);

  \node[leafpoint] (v12) at (-7.3,-2.78) {$\begin{array}{c}(1.10,2.64,\\0.00,0.00)\end{array}$};
  \node[leafpoint] (v13) at (-6,-3.04) {$\begin{array}{c}(1.10,0.00,\\2.83,0.00)\end{array}$};
  \node[leafpoint] (v14) at (-4.7,-2.78) {$\begin{array}{c}(1.10,0.00,\\0.00,3.00)\end{array}$};

  \node[leafpoint] (v21) at (-3.3,-3.04) {$\begin{array}{c}(2.89,1.39,\\0.00,0.00)\end{array}$};
  \node[leafpoint] (v23) at (-2,-2.78) {$\begin{array}{c}(0.00,1.39,\\3.26,0.00)\end{array}$};
  \node[leafpoint] (v24) at (-0.7,-3.04) {$\begin{array}{c}(0.00,1.39,\\0.00,3.40)\end{array}$};

  \node[leafpoint] (v31) at (0.7,-2.78) {$\begin{array}{c}(3.30,0.00,\\1.61,0.00)\end{array}$};
  \node[leafpoint] (v32) at (2,-3.04) {$\begin{array}{c}(0.00,3.47,\\1.61,0.00)\end{array}$};
  \node[leafpoint] (v34) at (3.3,-2.78) {$\begin{array}{c}(0.00,0.00,\\1.61,3.74)\end{array}$};

  \node[leafpoint] (v41) at (4.7,-3.04) {$\begin{array}{c}(3.64,0.00,\\0.00,1.79)\end{array}$};
  \node[leafpoint] (v42) at (6,-2.78) {$\begin{array}{c}(0.00,3.78,\\0.00,1.79)\end{array}$};
  \node[leafpoint] (v43) at (7.3,-3.04) {$\begin{array}{c}(0.00,0.00,\\3.91,1.79)\end{array}$};

  \draw[edge] (v1) -- node[pos=0.5, branchlabel, font=\tiny] {$2$} (v12);
  \draw[edge] (v1) -- node[pos=0.5, branchlabel, font=\tiny] {$3$} (v13);
  \draw[edge] (v1) -- node[pos=0.5, branchlabel, font=\tiny] {$4$} (v14);

  \draw[edge] (v2) -- node[pos=0.5, branchlabel, font=\tiny] {$1$} (v21);
  \draw[edge] (v2) -- node[pos=0.5, branchlabel, font=\tiny] {$3$} (v23);
  \draw[edge] (v2) -- node[pos=0.5, branchlabel, font=\tiny] {$4$} (v24);

  \draw[edge] (v3) -- node[pos=0.5, branchlabel, font=\tiny] {$1$} (v31);
  \draw[edge] (v3) -- node[pos=0.5, branchlabel, font=\tiny] {$2$} (v32);
  \draw[edge] (v3) -- node[pos=0.5, branchlabel, font=\tiny] {$4$} (v34);

  \draw[edge] (v4) -- node[pos=0.5, branchlabel, font=\tiny] {$1$} (v41);
  \draw[edge] (v4) -- node[pos=0.5, branchlabel, font=\tiny] {$2$} (v42);
  \draw[edge] (v4) -- node[pos=0.5, branchlabel, font=\tiny] {$3$} (v43);

  \foreach \v in {12,13,14,21,23,24,31,32,34,41,42,43} {
    \draw[edge, densely dotted] (v\v) -- ++(0,-0.45);
    \node[cont] at ($(v\v)+(0,-0.72)$) {$\vdots$};
  }

\end{tikzpicture}%
}
\caption{The first two levels of the logarithmic generalized Markov-Hurwitz tree $\widebar{\mcH}_{4,\lambda}$ for $\lambda=(0,1,2,3)$, with entries rounded to two decimal places.}
\label{fig:log-mutation-tree-n4-two-steps}
\end{figure}

The next diagram compares Figure~\ref{fig:log-mutation-tree-n4-two-steps} with Figure~\ref{fig:classical-euclid-tree-n4} coordinatewise. Namely, if corresponding nodes in Figure~\ref{fig:log-mutation-tree-n4-two-steps} and Figure~\ref{fig:classical-euclid-tree-n4} are labeled by $(\widebar{x_1},\widebar{x_2},\widebar{x_3},\widebar{x_4})$ and $(e_1,e_2,e_3,e_4)$ respectively, then the corresponding node below is labeled by
\[
  \frac{\widebar{\mfx}}{\mfe}
  \coloneqq
  \left(\frac{\widebar{x_1}}{e_1},\frac{\widebar{x_2}}{e_2},\frac{\widebar{x_3}}{e_3},\frac{\widebar{x_4}}{e_4}\right),
\]
again rounded to two decimal places.

\begin{figure}[htbp]
\centering
\resizebox{0.98\textwidth}{!}{%
\begin{tikzpicture}[
  point/.style={draw=none, fill=none, inner sep=1pt, font=\scriptsize},
  leafpoint/.style={draw=none, fill=none, inner sep=0.8pt, font=\tiny},
  branchlabel/.style={fill=white, inner sep=0.8pt, font=\scriptsize},
  cont/.style={draw=none, fill=none, font=\scriptsize, inner sep=0pt},
  edge/.style={line width=0.35pt}
]
  \node[point] (r) at (0,0) {$(0.00,0.00,0.00,0.00)$};

  \node[point] (v1) at (-6,-1.35) {$(0.37,0.00,0.00,0.00)$};
  \node[point] (v2) at (-2,-1.35) {$(0.00,0.46,0.00,0.00)$};
  \node[point] (v3) at (2,-1.35) {$(0.00,0.00,0.54,0.00)$};
  \node[point] (v4) at (6,-1.35) {$(0.00,0.00,0.00,0.60)$};

  \draw[edge] (r) -- node[pos=0.48, branchlabel] {$1$} (v1);
  \draw[edge] (r) -- node[pos=0.48, branchlabel] {$2$} (v2);
  \draw[edge] (r) -- node[pos=0.48, branchlabel] {$3$} (v3);
  \draw[edge] (r) -- node[pos=0.48, branchlabel] {$4$} (v4);

  \node[leafpoint] (v12) at (-7.3,-2.78) {$\begin{array}{c}(0.37,0.53,\\0.00,0.00)\end{array}$};
  \node[leafpoint] (v13) at (-6,-3.04) {$\begin{array}{c}(0.37,0.00,\\0.57,0.00)\end{array}$};
  \node[leafpoint] (v14) at (-4.7,-2.78) {$\begin{array}{c}(0.37,0.00,\\0.00,0.60)\end{array}$};

  \node[leafpoint] (v21) at (-3.3,-3.04) {$\begin{array}{c}(0.58,0.46,\\0.00,0.00)\end{array}$};
  \node[leafpoint] (v23) at (-2,-2.78) {$\begin{array}{c}(0.00,0.46,\\0.65,0.00)\end{array}$};
  \node[leafpoint] (v24) at (-0.7,-3.04) {$\begin{array}{c}(0.00,0.46,\\0.00,0.68)\end{array}$};

  \node[leafpoint] (v31) at (0.7,-2.78) {$\begin{array}{c}(0.66,0.00,\\0.54,0.00)\end{array}$};
  \node[leafpoint] (v32) at (2,-3.04) {$\begin{array}{c}(0.00,0.69,\\0.54,0.00)\end{array}$};
  \node[leafpoint] (v34) at (3.3,-2.78) {$\begin{array}{c}(0.00,0.00,\\0.54,0.75)\end{array}$};

  \node[leafpoint] (v41) at (4.7,-3.04) {$\begin{array}{c}(0.73,0.00,\\0.00,0.60)\end{array}$};
  \node[leafpoint] (v42) at (6,-2.78) {$\begin{array}{c}(0.00,0.76,\\0.00,0.60)\end{array}$};
  \node[leafpoint] (v43) at (7.3,-3.04) {$\begin{array}{c}(0.00,0.00,\\0.78,0.60)\end{array}$};

  \draw[edge] (v1) -- node[pos=0.5, branchlabel, font=\tiny] {$2$} (v12);
  \draw[edge] (v1) -- node[pos=0.5, branchlabel, font=\tiny] {$3$} (v13);
  \draw[edge] (v1) -- node[pos=0.5, branchlabel, font=\tiny] {$4$} (v14);

  \draw[edge] (v2) -- node[pos=0.5, branchlabel, font=\tiny] {$1$} (v21);
  \draw[edge] (v2) -- node[pos=0.5, branchlabel, font=\tiny] {$3$} (v23);
  \draw[edge] (v2) -- node[pos=0.5, branchlabel, font=\tiny] {$4$} (v24);

  \draw[edge] (v3) -- node[pos=0.5, branchlabel, font=\tiny] {$1$} (v31);
  \draw[edge] (v3) -- node[pos=0.5, branchlabel, font=\tiny] {$2$} (v32);
  \draw[edge] (v3) -- node[pos=0.5, branchlabel, font=\tiny] {$4$} (v34);

  \draw[edge] (v4) -- node[pos=0.5, branchlabel, font=\tiny] {$1$} (v41);
  \draw[edge] (v4) -- node[pos=0.5, branchlabel, font=\tiny] {$2$} (v42);
  \draw[edge] (v4) -- node[pos=0.5, branchlabel, font=\tiny] {$3$} (v43);

  \foreach \v in {12,13,14,21,23,24,31,32,34,41,42,43} {
    \draw[edge, densely dotted] (v\v) -- ++(0,-0.45);
    \node[cont] at ($(v\v)+(0,-0.72)$) {$\vdots$};
  }

\end{tikzpicture}%
}
\caption{The coordinatewise quotient of Figure~\ref{fig:log-mutation-tree-n4-two-steps} by Figure~\ref{fig:classical-euclid-tree-n4}, with entries rounded to two decimal places.}
\label{fig:quotient-log-mutation-tree-n4-two-steps}
\end{figure}

In this example, our second main result says that for any infinite path $\mfw\in\mbT_4$ in which each label $1,2,3,4$ appears infinitely many times, there exists a real number $q\in\mbR$ such that the corresponding point at infinity is $(q,q,q,q)$.

Strikingly, even after only two levels, the figure already hints at this phenomenon in each point: the newly mutated coordinates are becoming close to one another.

\section{Generalized Markov-Hurwitz equations}
In this section, we deal with a class of new Diophantine equations, which is a generalization of Markov-Hurwitz equations. The \textit{generalized Markov-Hurwitz equations} are given as follows,

\begin{align}\label{eq: generalized Markov Hurwitz equations}
 \displaystyle \sum_{i=1}^n X_i^2 + \sum_{i=1}^n \lambda_i X_1\cdots \widehat{X_i} \cdots X_n = (n + \sum_{i=1}^n \lambda_i) \prod_{i=1}^n X_i,
\end{align}
where $\lambda_i \in \mbZ_{\geq 0}$ for all $i$, and by the symbol $\widehat{X_i}$ we mean that the factor $X_i$ is omitted from the product.

\begin{remarkx}\label{rmk: generalized cluster algebra}\
We have two comments about the generalized Markov-Hurwitz equations.
	\begin{enumerate}
		\item  In the case of $\lambda_i=0$ for every $1\leq i \leq n$, Equation (\ref{eq: generalized Markov Hurwitz equations}) becomes the classical Markov-Hurwitz equation \cite{Hur07}.
		\item  In the case of $n=3$, Equation (\ref{eq: generalized Markov Hurwitz equations}) becomes the generalized Markov equation introduced by Gyoda and Matsushita \cite{GM23a}.
	\end{enumerate}
	\vspace{-20pt}
\end{remarkx}

\begin{lemma}\label{lem_mu}
	Let $(x_1, x_2, \ldots, x_n)$ be a positive integer solution to Equation (\ref{eq: generalized Markov Hurwitz equations}) and fix an index $m$ satisfying $1\leq m \leq n$. 
	Set
	\[
	x_m'
	:=
	\frac{\left(x_1^2+\cdots+x_{m-1}^2+x_{m+1}^2+\cdots+x_n^2\right)
		+\lambda_m x_1\cdots \widehat{x_m}\cdots x_n}{x_m},
	\]
	then $(x_1,\dots,x_{m-1},x_m',x_{m+1},\dots,x_n)$ is also a positive integer solution to Equation (\ref{eq: generalized Markov Hurwitz equations}).
\end{lemma}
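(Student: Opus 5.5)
The plan is the standard Vieta-jumping argument: treat the equation as a quadratic in $X_m$ with the other coordinates fixed, read off the second root from both Vieta relations, and use one for integrality and the other for positivity.

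Fix $x_j$ for $j\neq m$ and regard Equation (\ref{eq: generalized Markov Hurwitz equations}) as a polynomial equation in the single variable $X_m$. Write $P=\prod_{j\neq m}x_j$ and $K=n+\sum_{i=1}^n\lambda_i$. Every term $\lambda_i X_1\cdots\widehat{X_i}\cdots X_n$ with $i\neq m$ contains $X_m$ linearly, equal to $\lambda_i X_m P/x_i$, while the term $i=m$ is the constant $\lambda_m P$. So the equation becomes
\[
X_m^2-\Bigl(KP-\sum_{i\neq m}\lambda_i\frac{P}{x_i}\Bigr)X_m+\Bigl(\sum_{j\neq m}x_j^2+\lambda_m P\Bigr)=0 .
\]
This is a monic quadratic in $X_m$, and its coefficients are integers: $P/x_i=\prod_{j\neq m,i}x_j$ is a product of positive integers.

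Since $x_m$ is a root, Vieta's formulas give a second root $y$ with
\[
y=KP-\sum_{i\neq m}\lambda_iP/x_i-x_m \quad\text{and}\quad x_m y=\sum_{j\neq m}x_j^2+\lambda_m P .
\]
The product relation shows that $y$ equals the quantity $x_m'$ defined in the statement. The sum relation shows that $y$ is an integer. Because $y$ is a root, substituting it for $X_m$ satisfies Equation (\ref{eq: generalized Markov Hurwitz equations}), so $(x_1,\dots,x_m',\dots,x_n)$ is again a solution.

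For positivity, the product formula does the work: $x_m'=\bigl(\sum_{j\neq m}x_j^2+\lambda_m P\bigr)/x_m$ is strictly positive, since $n\ge 3$ guarantees at least one $x_j^2>0$ with $j\neq m$. Hence $x_m'$ is a positive integer.

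There is no real obstacle here. The only point needing care is checking that the coefficient of $X_m$ is an integer, namely that the cross terms $\lambda_i X_1\cdots\widehat{X_i}\cdots X_n$ for $i\neq m$ contribute integer multiples of $X_m$. This is what lets integrality come from the sum relation while positivity comes from the product relation.
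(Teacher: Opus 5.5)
Your proof is correct and follows the same argument as the paper: view the equation as a monic quadratic in $X_m$, identify $x_m'$ as the second root via the product relation, and get integrality from the sum relation. You also spell out two points the paper leaves implicit, namely that the linear coefficient is an integer because $P/x_i=\prod_{j\neq m,i}x_j$, and that $x_m'>0$ follows from the product formula.
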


\begin{proof}
	Set $p_m:=x_1\cdots \widehat{x_m}\cdots x_n$ and $q_m:=x_1^2+\cdots+x_{m-1}^2+x_{m+1}^2+\cdots+x_n^2+\lambda_m p_m$.
    Substituting \(X_i=x_i\) for all \(i\ne m\) into Equation~(\ref{eq: generalized Markov Hurwitz equations}), we obtain the following quadratic equation in \(X_m\):
	\[
	X_m^2
	-\left(n+\sum_{i=1}^n\lambda_i-\sum_{i\neq m}\frac{\lambda_i}{x_i}\right)p_mX_m + q_m
	=0.
	\]
	
 By Vieta's formulas, the product of the two roots is \(q_m\). 	Obviously, $x_m$ is a root of this quadratic equation. Since \(x_mx_m'=q_m\), we conclude that \(x_m'\) is the other root of this quadratic equation.
 Equivalently, $(x_1,\dots,x_{m-1},x_m',x_{m+1},\dots,x_n)$ is again a positive solution of Equation (\ref{eq: generalized Markov Hurwitz equations}). Since $x_m+x_m' = \left(n+\sum_{i=1}^n\lambda_i-\sum_{i\neq m}\frac{\lambda_i}{x_i}\right)p_m$, $x_m'$ is an integer.
\end{proof}

Now we define a mutation rules associated to the set of positive integer solutions of Equation (\ref{eq: generalized Markov Hurwitz equations}).

\begin{definition}\label{mutation rules}
	With the above notion, the \textit{solution mutation $\mu_i$ in direction $i$} is defined as follows
	\[
	\mu_i(x_1, \dots, x_i, \dots, x_n) \notag = (x_1, \dots, x_i', \dots, x_n).
	\]
\end{definition}

\begin{remarkx}\label{rmk: generalized mutations}
In the case \(n=3\), our mutation rules coincide with those defined in \cite{GM23a}.
\end{remarkx}
From the definition, it follows that applying the mutation in the same direction twice returns the original point. That is,
\begin{align}\label{Vieta jumping}
	\mu_i^2(x_1,x_2,\ldots,x_n)=(x_1,x_2,\ldots,x_n).
\end{align}
We call this the \emph{Vieta jumping phenomenon}.


The proof of the following lemma is an analogue and generalization of \cite[Proposition 18]{GMR19}.

\begin{lemma}\label{lem_posi}
	Let $x:= (x_1, x_2, \dots, x_n)$ be a positive solution to Equation (\ref{eq: generalized Markov Hurwitz equations}), distinct from $(1,1,\dots,1)$. 
  If $x_j$ is the largest coordinate of $x$, then the largest entry of $\mu_j(x)$ is strictly smaller than $x_j$, that is, $(\mu_j(x))_i<x_j$ for all $i$.
\end{lemma}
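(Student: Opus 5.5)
Since $\mu_j$ changes only the $j$-th coordinate and $x_j$ is maximal, $(\mu_j(x))_i=x_i\le x_j$ for all $i\neq j$. The statement therefore reduces to proving $x_j'<x_j$, and the plan is to locate a point strictly between the two Vieta roots. As in the proof of \Cref{lem_mu}, put $p_j=\prod_{i\neq j}x_i$, $\Lambda=\sum_i\lambda_i$, and
\[
f(X)=X^2-BX+q_j,\qquad B=\Bigl(n+\Lambda-\sum_{i\neq j}\tfrac{\lambda_i}{x_i}\Bigr)p_j,\qquad q_j=\sum_{i\neq j}x_i^2+\lambda_j p_j .
\]
The roots of $f$ are $x_j$ and $x_j'$. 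Let $m=\max_{i\neq j}x_i=x_k$, so $m\le x_j$. If I show $f(m)<0$, then $m$ lies strictly between the two roots. Since $m\le x_j$ and $f(x_j)=0$, this forces $x_j'<m<x_j$, which is the claim.

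The main step is to bound $f(m)$ from above. There are three estimates.
\begin{enumerate}
\item Since $x_i\le m$ for $i\neq j$, we have $\sum_{i\neq j}x_i^2\le (n-1)m^2$.
\item For $i\neq j$, the quantity $m\,p_j/x_i$ is $m$ times a product of $n-2$ coordinates each at most $m$. Hence $m\,p_j/x_i\le m\,p_j$, and so $\sum_{i\neq j}\lambda_i\,m\,p_j/x_i\le(\Lambda-\lambda_j)\,m\,p_j$.
\item Since $m\ge 1$, we have $\lambda_j p_j\le\lambda_j\,m\,p_j$.
\end{enumerate}
Combining these gives
\[
f(m)\le n m^2-(n+\lambda_j)\,m\,p_j+\lambda_j\,m\,p_j=n\,m\,(m-p_j).
\]
Because $p_j$ contains the factor $x_k=m$, we have $p_j\ge m$. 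Equality holds exactly when every $x_i$ with $i\neq j,k$ equals $1$. Hence $f(m)<0$ whenever some other coordinate exceeds $1$.

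The main obstacle is the boundary case where all the inequalities may become equalities, namely $x_i=1$ for all $i\notin\{j,k\}$. I would split it into two subcases.
\begin{itemize}
\item If $m>1$, estimate (1) is strict: since $n\ge3$, there is an index $i\notin\{j,k\}$ with $x_i=1<m$. So $\sum_{i\neq j}x_i^2<(n-1)m^2$, which still gives $f(m)<0$.
\item If $m=1$, all coordinates other than $x_j$ equal $1$. The equation then becomes the quadratic
\[
X_j^2-\Bigl(n+\Lambda-\sum_{i\neq j}\lambda_i\Bigr)X_j+(n-1+\lambda_j)=0,
\]
whose roots are $1$ and $n-1+\lambda_j$. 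Since $x\neq(1,\dots,1)$, we must have $x_j=n-1+\lambda_j\ge 2$, and then $x_j'=1<x_j$ directly.
\end{itemize}
In every case $x_j'<x_j$, which together with the first paragraph proves the lemma.
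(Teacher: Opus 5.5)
Your proof is correct and follows essentially the paper's route. You evaluate the Vieta quadratic at the second-largest coordinate and use the same three estimates to get $f(m)\le n\,m(m-p_j)\le 0$. You also treat separately the degenerate case where all remaining coordinates equal $1$; the difference there is only that you argue directly (strictness of the square-sum bound for $n\ge 3$ when $m>1$, an explicit solve when $m=1$), whereas the paper argues by contradiction with an auxiliary quadratic $g$ and a separate tie case.

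One small fix: your first paragraph only gives $x_i\le x_j$ for $i\neq j$, but the lemma asserts strict inequality, i.e.\ uniqueness of the maximum, which is used in \Cref{thm: generate}. Your case analysis already yields $m<x_j$ in every case ($x_j'<m<x_j$ when $f(m)<0$, and $x_j\ge 2>1=m$ when $m=1$), so you just need to say so explicitly in the concluding sentence.
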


\begin{proof}
	After simultaneously relabeling $x_i$ and $\lambda_i$, we may assume that $x_1\le x_2\le \cdots \le x_n$.
	Similar to Lemma \ref{lem_mu}, set $p \coloneqq x_1\cdots x_{n-1}$ and $q \coloneqq \sum_{i=1}^{n-1}x_i^2+\lambda_n p$.
	Substituting \(X_i=x_i\) for all \(i\ne n\) into Equation~(\ref{eq: generalized Markov Hurwitz equations}), we obtain the following quadratic polynomial in $T$:
	\[
	f(T) = T^2
	-\left(n+\sum_{i=1}^n\lambda_i-\sum_{i=1}^{n-1}\frac{\lambda_i}{x_i}\right)pT + q.
	\]
	Then $f(T) = 0$ has roots at $x_n$ and $x_n'$, where $x_n'$ is the newly mutated coordinate in $\mu_n(x)$.
	It is easy to see that the lemma holds unless we have either
	\[
    x_{n-1}\le x_n\le x_n'
	\]
	or
	\[
	  x_n'<x_{n-1}=x_n.
	\]

	In both cases, since the coefficient of $T^2$ is positive, it follows that
	$f(x_{n-1}) \ge 0$. 
  A direct computation gives
	\begin{align*}
		f(x_{n-1})
		&=
		\sum_{i=1}^{n-2}x_i^2+2x_{n-1}^2
		+(\lambda_{n-1}+\lambda_n)p
		-(n+\sum_{i=1}^n\lambda_i)px_{n-1}
		+\sum_{i=1}^{n-2}\frac{\lambda_i}{x_i}px_{n-1}.
	\end{align*}
	Since $1\le x_1\leq \cdots \leq x_{n-1}$, we have the following inequalities:
	\[
	  \sum_{i=1}^{n-2}x_i^2\leq (n-2)x_{n-1}^2, \quad p\leq px_{n-1}, \quad \frac{p}{x_i}x_{n-1}\leq px_{n-1}.
	\]
	Therefore, we obtain
	\begin{align*}
		f(x_{n-1})
		&\leq
		(n-2)x_{n-1}^2+2x_{n-1}^2
		+(\lambda_{n-1}+\lambda_n)px_{n-1} 
		-(n+\sum_{i=1}^n\lambda_i)px_{n-1}
		+\sum_{i=1}^{n-2}\lambda_i px_{n-1} \\
		&= n(1-\sum_{i=1}^{n-2}x_i)x_{n-1}^2\leq 0.
	\end{align*}
	Hence $f(x_{n-1}) = 0$, so we have
	\[
	  x_1=x_2=\cdots=x_{n-2}=1.
	\]
	
	Thus, the generalized Markov-Hurwitz equation reduces to
	\begin{align}\label{eq: reduced Markov Hurwitz equations}
	  x_{n-1}^2+x_n^2+\lambda_{n-1}x_n+\lambda_n x_{n-1}+(n-2) = (n+\lambda_{n-1}+\lambda_n)x_{n-1}x_n.
  \end{align}
	
	As we mentioned before, this lemma holds unless we have either $x_{n-1}\le x_n\le x_n'$ or $x_n'<x_{n-1}=x_n$, now we will study these two cases separately.
	
	\noindent \emph{Case 1:} \(x_n=x_{n-1}\).
	
	Then Equation (\ref{eq: reduced Markov Hurwitz equations}) becomes
	\[
	  (n+\lambda_{n-1}+\lambda_n-2)x_n^2-(\lambda_{n-1}+\lambda_n)x_n-(n-2)=0.
	\]
	This factors as
	\[
	  (x_n-1)\bigl((n+\lambda_{n-1}+\lambda_n-2)x_n+(n-2)\bigr)=0.
	\]
	Since \(x_n>0\) and $n\geq 2$, it follows that $x_n=x_{n-1}=1$.
	Hence in this case, we finally obtain
	\[
	  x=(1, 1, \dots, 1).
	\]
  Which is a contradiction to the assumption that $x$ is distinct from $(1,1,\dots,1)$.
	
	\noindent \emph{Case 2:} \(x_n'\geq x_n>x_{n-1}\).

	Set $s:=n+\lambda_{n-1}+\lambda_n$.
	Consider the following quadratic polynomial
	\[
	g(T)=T^2-(sx_{n-1}-\lambda_{n-1})T+(x_{n-1}^2+\lambda_n x_{n-1}+n-2).
	\]
	which is obtained by replacing $x_n$ by $T$ in Equation (\ref{eq: reduced Markov Hurwitz equations}).
	Since $x_n$ is a root of $g$, the other root is $x_n':=sx_{n-1}-\lambda_{n-1}-x_n$.
	Moreover, by substituting $T$ with $x_{n-1}$, we have
	\begin{align*}
		g(x_{n-1})
		&=x_{n-1}^2-(sx_{n-1}-\lambda_{n-1})x_{n-1}+(x_{n-1}^2+\lambda_n x_{n-1}+n-2) \\
		&=-(x_{n-1}-1)\bigl((s-2)x_{n-1}+n-2\bigr).
	\end{align*}
	Since $x_{n-1}\ge 1$ and $s\geq n\geq 2$, it follows that $g(x_{n-1})\leq0$.
	Therefore $x_{n-1}$ sits between the two roots of $g$, that is, $x_n'\geq x_{n-1}\geq x_n$.
  However, this contradicts to the assumption that $x_n>x_{n-1}$.

  Hence, both cases lead to contradictions, so the lemma holds.
\end{proof}

\begin{theorem}\label{thm: generate}
	Every positive integer solution of the generalized Markov-Hurwitz equation (\ref{eq: generalized Markov Hurwitz equations}) appears as a node of the generalized Markov-Hurwitz tree $\mcH_{n,\lambda}$. More precisely, for any positive integer solution $(x_1,x_2,\dots,x_n)$, there exists a finite path $\mfw=[w_1,\dots,w_t]$ in $\mbT_n$ such that
  \[
    (x_1,x_2,\dots,x_n)
    =
    (\mu_{w_t}\circ\cdots\circ\mu_{w_1})(1,1,\dots,1).
  \]
\end{theorem}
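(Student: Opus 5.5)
We argue by descent on the size of the largest coordinate, using Lemma~\ref{lem_posi} as the engine and the Vieta jumping phenomenon (\ref{Vieta jumping}) to reverse the descent. Let $x=(x_1,\dots,x_n)$ be a positive integer solution of Equation~(\ref{eq: generalized Markov Hurwitz equations}), and set $M(x):=\max_i x_i$. We induct on $M(x)$, or equivalently build a descending chain.

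\emph{Base case.} If $x=(1,\dots,1)$, then the empty path works. Note that $(1,\dots,1)$ is indeed a solution, since both sides equal $n+\sum_i\lambda_i$.

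\emph{Inductive step.} Suppose $x\neq(1,\dots,1)$, and choose an index $j$ with $x_j=M(x)$. By Lemma~\ref{lem_mu}, $y:=\mu_j(x)$ is again a positive integer solution. By Lemma~\ref{lem_posi}, every coordinate of $y$ is strictly smaller than $x_j$, so $M(y)<M(x)$. Iterating produces solutions $x=x^{(0)},x^{(1)},\dots$ with strictly decreasing maxima. These maxima are positive integers, so the process terminates after finitely many steps $t$. It can only stop at a solution to which Lemma~\ref{lem_posi} does not apply, namely $(1,\dots,1)$. Hence
\[
(\mu_{j_t}\circ\cdots\circ\mu_{j_1})(x)=(1,\dots,1),
\]
where $j_s$ is the chosen maximal index at step $s$.

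\emph{Reversing the chain.} Since each $\mu_i$ is an involution by (\ref{Vieta jumping}), we obtain
\[
x=(\mu_{j_1}\circ\cdots\circ\mu_{j_t})(1,\dots,1).
\]
Set $w_s:=j_{t+1-s}$. It remains to check that $[w_1,\dots,w_t]$ is a genuine path in $\mbT_n$, that is, consecutive labels differ: $j_{s+1}\neq j_s$.

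\emph{Main obstacle: the path condition.} After applying $\mu_{j_s}$, the coordinate $j_s$ of $x^{(s)}$ is strictly less than $M(x^{(s-1)})$. I would first try to show that it is not the maximum of $x^{(s)}$ whenever $x^{(s)}\neq(1,\dots,1)$. If it were, applying $\mu_{j_s}$ again would return $x^{(s-1)}$, whose maximum is larger, contradicting Lemma~\ref{lem_posi} applied to $x^{(s)}$. There is one subtlety: the maximum may be attained at several indices. In that case we choose $j_{s+1}$ to be a maximal index different from $j_s$. This choice is possible by the same argument, because Lemma~\ref{lem_posi} applies to any index realizing the maximum. Thus we may always take $j_{s+1}\neq j_s$, and the reversed sequence is a valid finite path in $\mbT_n$, which proves the theorem.
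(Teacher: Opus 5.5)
Your proposal is correct and follows essentially the same route as the paper: descend by mutating at a largest coordinate using Lemma~\ref{lem_posi}, terminate by integrality at $(1,\dots,1)$, and reverse the chain using the involution property (\ref{Vieta jumping}). Your explicit check that consecutive labels differ, so that the reversed word is a genuine path in $\mbT_n$, fills in a point the paper leaves implicit. The tie-breaking subtlety you raise is vacuous, since Lemma~\ref{lem_posi} already forces the maximum of any solution other than $(1,\dots,1)$ to be attained at a unique index.
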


\begin{proof}
  Suppose that $(x_1,x_2,\dots,x_n)$ is any positive integer solution to  Equation (\ref{eq: generalized Markov Hurwitz equations}) which is distinct from $(1,1,\dots,1)$.
  
  By Lemma \ref{lem_posi},  if we repeatedly mutate in the direction of a largest coordinate
  (note that the largest coordinate is unique from \Cref{lem_posi}), then after finitely many steps,  we eventually reach the initial solution
  \((1,1,\dots,1)\). Due to the Vieta jumping phenomenon (\ref{Vieta jumping}), we can obtain $(x_1,x_2,\dots,x_n)$ from $(1,1,\dots,1)$.
  Hence, $(x_1,x_2,\dots,x_n)$ can be generated from $(1,1,\dots,1)$ by finitely many mutations.
\end{proof}

We conclude this section with the following proposition.

\begin{proposition}\label{mutate to max}
  If $x_j$ is not the largest coordinate of $(x_1,x_2, \dots, x_n)$, then it becomes the largest after the mutation $\mu_j$.
  That is, $(\mu_j(x_1, x_2, \dots, x_n))_j > (\mu_j (x_1, x_2, \dots, x_n))_i$ for all $i \neq j$. 
\end{proposition}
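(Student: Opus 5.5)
We argue by contradiction, reducing the statement to \Cref{lem_posi} through the Vieta jumping phenomenon (\ref{Vieta jumping}). Let $x=(x_1,\dots,x_n)$ be a positive integer solution and suppose $x_j$ is not the largest coordinate of $x$. Set $y\coloneqq\mu_j(x)$, so $y_i=x_i$ for $i\neq j$, $y_j=x_j'$, and $\mu_j(y)=x$ by (\ref{Vieta jumping}). By \Cref{lem_mu}, $y$ is again a positive integer solution. We want to show that $y_j>y_i$ for all $i\neq j$.

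First, $y\neq(1,\dots,1)$. Indeed, if $y$ were the fundamental solution, then $x=\mu_j(1,\dots,1)$ would have $x_j=n+\lambda_j\cdot 1-\cdots$; more precisely $x_j=(n-1)+\lambda_j\ge 2>1=x_i$ for $i\ne j$. Then $x_j$ would be the strict maximum of $x$, contradicting the hypothesis. So \Cref{lem_posi} applies to $y$.

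Suppose now, for contradiction, that $y_j$ is not strictly larger than every other coordinate of $y$. Then $y_j\le y_k$ for some $k\neq j$. Two cases arise.

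\emph{Case $y_j<y_k$.} Here $y$ has a largest coordinate $y_m$ with $m\neq j$. By \Cref{lem_posi}, $\mu_m$ is the unique mutation decreasing the maximum. Applying $\mu_j$ to $y$ instead gives $x$, and $x$ contains the coordinate $x_m=y_m$, which by hypothesis is at least $x_j$. So $\mu_j$ does not lower the maximum of $y$.

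\emph{Case $y_j=y_k$ is maximal.} This is excluded by \Cref{lem_posi}, which makes the largest coordinate unique for any solution other than $(1,\dots,1)$.

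To get a contradiction in the first case, I would use a descent argument on $y$. Since $y_m$ is the unique largest coordinate, \Cref{lem_posi} gives the reduction path of $y$ down to $(1,\dots,1)$, and that path starts with $\mu_m$. The hardest step is showing that mutating at a non-maximal coordinate strictly increases that coordinate beyond the current maximum.

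For this step I would redo the quadratic-polynomial argument from the proof of \Cref{lem_posi} directly. Take $f(T)$ with roots $y_j$ and $x_j$, where $y_j$ is not the maximum, and evaluate $f(y_m)$ at the maximal coordinate $y_m$ of $y$. The same chain of inequalities as for $f(x_{n-1})$ should give $f(y_m)\le 0$, with equality only in the degenerate configuration $x_i=1$ for the remaining $n-2$ indices. Hence $y_m$ lies between the roots, so $x_j\ge y_m\ge y_j$. But the hypothesis says $x_j$ is not the largest coordinate of $x$, and $x$ contains $y_m$; this forces equality. The equality case then reduces to the two-variable equation (\ref{eq: reduced Markov Hurwitz equations}), which is handled exactly as in Cases 1 and 2 of \Cref{lem_posi}.

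The same computation also proves the proposition directly. Apply it to $x$ with $m$ an index of a largest coordinate of $x$: evaluating $f(x_m)\le 0$ shows that $x_j'\ge x_m\ge x_i$ for all $i$. Strictness comes from the equality analysis of the degenerate reduced equation. I expect that equality bookkeeping to be the main obstacle.
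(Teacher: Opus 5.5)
Your final paragraph gives a correct proof, but by a different route from the paper. The paper uses the sum of the Vieta roots, $x_j+x_j'=\bigl(n+\sum_i\lambda_i-\sum_{i\ne j}\lambda_i/x_i\bigr)\prod_{i\ne j}x_i$, to write $x_j'-x_n$ explicitly, and bounds it below by $(n-1)x_n-x_j>0$; strictness is automatic there.

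You instead use the sign of the quadratic $f$ (with roots $x_j,x_j'$) at $M=x_m=\max_{i\ne j}x_i$. The estimates in the proof of \Cref{lem_posi} never use the size of the mutated coordinate, so after relabelling they give $f(M)\le nM^2\bigl(1-\prod_{i\ne j,m}x_i\bigr)\le 0$, hence $x_j<M\le x_j'$. This shows that one inequality drives both \Cref{lem_posi} and the present proposition.

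However, it only gives $x_j'\ge M$, so you still owe the equality case. It is quick. If $x_j'=M$, then $\mu_j(x)$ is a solution different from $(1,\dots,1)$, since it contains $x_m>x_j\ge 1$. Its two coordinates $x_j'$ and $x_m$ are then both largest, contradicting the uniqueness of the maximum that you already derived from \Cref{lem_posi}. Equivalently, $f(M)=0$ forces $x_i=1$ for $i\ne j,m$, and the reduced two-variable equation with $x_j'=x_m$ factors as in Case~1 there, giving $x_m=1$.

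Drop the contradiction set-up that precedes this. The step you call the hardest in the ``descent argument'' is the proposition itself, so that part is circular, and the case $y_j<y_k$ reaches no contradiction on its own.

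Finally, neither your argument nor the paper's is really needed. Straight from the definition of $\mu_j$, $x_j'\ge x_m^2/x_j>x_m$, because $x_j<x_m$. This proves the statement in one line without even using that $x$ is a solution.
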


\begin{proof}
  Without loss of generality, we assume $x_1 \leq x_2 \leq \dots < x_n$, let $(x_1^{\prime}, x_2^{\prime}, \dots x_n^{\prime}) = \mu_j (x_1, x_2, \dots, x_n)$ with $j < n$.
  We can calculate
  \begin{align}
    x_j^{\prime} - x_n &= \frac{\left(x_1^2+\cdots+x_{j-1}^2+x_{j+1}^2+\cdots+x_n^2\right) +\lambda_j x_1\cdots \widehat{x_j}\cdots x_n}{x_j} -x_n \notag \\
    &= (n + \sum_{i \neq j} \lambda_i) \prod_{i \neq j} x_i - \sum_{i \neq j} \lambda_i (\prod_{m \neq i,j} x_m) - x_j - x_n \notag \\
    &= x_n \times [(n + \sum_{i \neq j} \lambda_i) \prod_{i \neq j,n} x_i - \sum_{i \neq j,n} \lambda_i (\prod_{m \neq i,j,n} x_m)-1] - \lambda_n \prod_{m \neq j,n}x_m - x_j \notag \\
    &> x_n \times [(n + \sum_{i \neq j} \lambda_i) \prod_{i \neq j,n} x_i - \sum_{i \neq j,n} \lambda_i (\prod_{m \neq i,j,n} x_m)- \lambda_n \prod_{m \neq j,n}x_m -1] - x_j \notag \\
    &\geq x_n \times (n - 1) -x_j > 0.
  \end{align}
  Thus, $x_j^{\prime}$ is the largest of $(x_1^{\prime}, x_2^{\prime}, \dots x_n^{\prime})$.
\end{proof}

\section{Asymptotic behavior of $k$-deformed $n$-branched Euclid trees}

In this section, we study the asymptotic phenomenon of rather simpler mutation trees, namely, the $k$-deformed $n$-branched Euclid trees.
We aim to prove the asymptotic phenomenon between the $k$-deformed $n$-branched Euclid trees and the classical $n$-branched Euclid trees.

\begin{definitionx}
  Let $n \in \mbN_{\geq 2}$ and $k \in \mbZ_{\geq 0}$.
  The \textit{$k$-deformed $n$-branched Euclid tree} $\mcE_{n,k}$ is defined as the tree generated from the initial point $(e_1,e_2,\dots,e_n)$ by the following mutation rules:
  \begin{align}
    \mcM_{i;k} (x_1, x_2, \ldots, x_n) = (x_1, x_2, \dots, x_1 + \cdots + \widehat{x_i} + \cdots + x_n + k, \dots, x_n)
  \end{align} 
  where $i=1,2,\dots,n$.
The $0$-deformed $n$-branched Euclid tree will be called the \emph{classical $n$-branched Euclid tree}, and will be denoted simply by $\mathcal E_n$.
\end{definitionx}

The name ``$n$-branched Euclid tree'' first appears in \cite{Bar94}, where the author studied the asymptotic growth of those trees.

\begin{remarkx}
Note that in the previous section, we define a tree by the initial solution point $(1,1,\dots,1)$ and solution mutations associate to Equation (\ref{eq: generalized Markov Hurwitz equations}).
However, in the current section, we can consider the tree generated from any initial point $(e_1,e_2,\dots,e_n)$, where $e_i \in \mbZ_{\geq 0}$ for $i=1,2,\dots,n$.
\end{remarkx}

To clarify the statements and proofs in this section, we need to introduce some notations:

\begin{notations}\label{Notation for Euclid}
  For the $k$-deformed $n$-branched Euclid tree $\mcE_{n,k}$ and any initial point 
  \[
    (e_1,e_2,\dots,e_n) \in \mbZ_{\geq 0}^n
  \]
  take $\mfw = [w_1, w_2, \dots] \in \mbT_n$.

  We then obtain a sequence of points by doing the mutation chain following the directions in $\mfw$ which will be denoted by the following:
  \begin{align*}
    (e_1,e_2,\dots,e_n) \xrightarrow{\mcM_{w_1;k}} ({}_1 e_1, {}_1 e_2, \dots, {}_1 e_n) \xrightarrow{\mcM_{w_2;k}} ({}_2 e_1, {}_2 e_2, \dots, {}_2 e_n) \xrightarrow{\mcM_{w_3;k}} \cdots
  \end{align*}
  Note that from the definition of the mutation rules, we only change one coordinate at each mutation.
  Thus we can take the newly changed number each time after each mutation to form a sequence of numbers, which will be denoted by the following:
  \begin{align*}
    \mathfrak{e}_1 \coloneqq {}_1 e_{w_1}, \mathfrak{e}_2 \coloneqq {}_2 e_{w_2}, \dots
  \end{align*}
  In this way, we obtain a sequence of numbers $\{\mathfrak{e}_i\}_{i=1}^{\infty}$ from the mutation chain $\mcM_k^{\mfw}$ starting from the initial point $(e_1,e_2,\dots,e_n)$.
\end{notations}

Take $\mfw \in \mbT_n$, and suppose that the direction $i$ appears in $\mfw$ only finitely many times, i.e. $\mfw = [w_1, w_2, \dots, w_N=i, w_{N+1}, \dots]$, and $i \neq w_{N+j}$ for any $j \in \mbN$.
Then considering the mutation $\mcM_k^{\mfw} (e_1,e_2,\dots,e_n)$ is the same as considering the mutation 
\[
  \mcM_k^{[w_{N+1}, \dots]}(\mcM_k^{[w_1, w_2, \dots, w_{N}]}(e_1,e_2,\dots,e_n)).
\]
Denote $\mcM_k^{[w_1, w_2, \dots, w_{N}]}(e_1,e_2,\dots,e_n)$ by $(e_1^{\prime}, e_2^{\prime}, \dots, e_n^{\prime})$, take $e_i^{\prime}$ out and we rearrange the directions in the following way:

\begin{itemize}
  \item If $j < i$, then we keep the direction $j$ unchanged.
  \item If $j > i$, then we replace the direction $j$ by the direction $j-1$.
\end{itemize}

Then we denote such point after rearranging as $(e_1^{\prime \prime},e_2^{\prime \prime}, \dots, e_{n-1}^{\prime \prime})$.
Denote $\mfw^{\prime} \in \mbT_{n-1}$ the sequence obtained from $\mfw$ by cutting out $[w_1,w_2,\dots, w_N=i]$. 
Then it is easy to see that the sequence of numbers $\{\mathfrak{e}_j\}_{j=i}^{\infty}$ obtained from the mutation chain $\mcM_{k+e_i^{\prime}}^{\mfw^{\prime}} (e_1^{\prime \prime},e_2^{\prime \prime}, \dots, e_{n-1}^{\prime \prime})$ 
can be regarded the same as the sequence of numbers $\{\mathfrak{e}_j\}_{j=i}^{\infty}$ obtained from the mutation chain $\mcM_k^{[w_{N+1}, \dots]}(e_1^{\prime}, e_2^{\prime}, \dots, e_n^{\prime})$.

Thus, if some directions appears in $\mfw \in \mbT_n$ only finitely many steps, then we can reduce it to the case where $\mfw^{\prime} \in \mbT_{m}$, where $m < n$.
Therefore, from now on, we will only consider the case under the following assumption:

\begin{assumption}\label{generic assumption}
  For $\mfw \in \mbT_n$, we assume that each direction appears infinitely many times in $\mfw$, we call such $\mfw$ a generic sequence in $\mbT_n$.
\end{assumption}

\subsection{Comparison between $k$-deformed Euclid trees and classical Euclid trees}
  Let $\mcE_{n,k}$ be the $k$-deformed $n$-branched Euclid tree starting from an arbitrary initial point $(\beta_1, \beta_2, \dots, \beta_n) \in \mbZ_{\geq 0}^n$,
  and $\mcE_n$ be the classical $n$-branched Euclid tree starting from an arbitrary initial point $(\alpha_1, \alpha_2, \dots, \alpha_n) \in \mbZ_{\geq 0}^n$.

  Since both trees have the same underlying tree structure, which we denote by \(\mathbb T_n\), every position
  \(\Theta\in \mathbb T_n\) determines two corresponding points
  \[
  (y_1,y_2,\dots,y_n)\in \mathcal E_{n,k}
  \quad\text{and}\quad
  (x_1,x_2,\dots,x_n)\in \mathcal E_n.
  \]
  \begin{definitionx}[Comparison point]\label{Comparison point}
  	With the above notation, the \emph{comparison point} associated to \(\Theta\) is defined by
  \[
(l_1, l_2, \dots, l_n) = (\dfrac{y_1}{x_1}, \dfrac{y_2}{x_2}, \dots, \dfrac{y_n}{x_n}).
\]
  	By assigning a comparison point to each position \(\Theta\in\mathbb T_n\), we obtain a new tree, called the
  	\emph{comparison tree}, which we denote by \(\mathcal L_n\).
  \end{definitionx}

The comparison tree \(\mathcal L_n\) also admits mutation rules. Keeping the notation in
Definition \ref{Comparison point}, consider the mutation in the direction \(i\) at a position \(\Theta\in \mathbb T_n\).

The mutation in \(\mathcal E_n\) is given by
\[
(x_1,x_2,\dots,x_n)
\xrightarrow{\mathcal M_{i;0}}
(x_1,x_2,\dots,x_1+\cdots+\widehat{x_i}+\cdots+x_n,\dots,x_n),
\]
and the mutation in \(\mathcal E_{n,k}\) is given by
\[
(y_1,y_2,\dots,y_n)
\xrightarrow{\mathcal M_{i;k}}
(y_1,y_2,\dots,y_1+\cdots+\widehat{y_i}+\cdots+y_n+k,\dots,y_n).
\]

Set $S_i:=x_1+\cdots+\widehat{x_i}+\cdots+x_n$.
Then the induced mutation in \(\mathcal L_n\) is
\begin{align}
	(l_1,l_2,\dots,l_n)
	\xrightarrow{\Delta_i}&
	\bigl(
	l_1,\dots,l_{i-1},
	\frac{y_1+\cdots+\widehat{y_i}+\cdots+y_n+k}{S_i},
	l_{i+1},\dots,l_n
	\bigr)
	\notag\\
	&=
	\biggl(
	l_1,\dots,l_{i-1},
	\sum_{j\ne i}\frac{x_j}{S_i}l_j+\frac{k}{S_i},
	l_{i+1},\dots,l_n
	\biggr).
	\label{mutation in comparison tree in general}
\end{align}

If we call the interval determined by the numbers
\[
l_1,\dots,l_{i-1},\,l_{i+1},\dots,l_n
\]
the \emph{total interval} and denote it by \(L\), then the mutation \(\Delta_i\) in
\eqref{mutation in comparison tree} may be interpreted as follows: first take a weighted average inside the total interval \(L\), with weights
\[
\frac{x_1}{S_i},\dots,\frac{x_{i-1}}{S_i},\frac{x_{i+1}}{S_i},\dots,\frac{x_n}{S_i},
\]
and then shift the result to the right by \(\frac{k}{S_i}\).

\begin{figure}[htbp]
	\centering
	\begin{tikzpicture}[x=1cm,y=1cm]
		\usetikzlibrary{decorations.pathreplacing,arrows.meta}
		
		\tikzset{
			pt/.style={circle,fill=black,inner sep=1.4pt},
			lbl/.style={font=\small},
			arr/.style={-{Latex[length=2mm]}, thick}
		}
		
		\draw[thick] (0,0) -- (11,0);
		
		\coordinate (Lmin) at (2,0);
		\coordinate (Lmax) at (8,0);
		
		\draw[thick] (2,-0.12) -- (2,0.12);
		\draw[thick] (8,-0.12) -- (8,0.12);
		
		\node[lbl,below=4pt] at (2,0) {$\min\limits_{j\ne i} l_j$};
		\node[lbl,below=4pt] at (8,0) {$\max\limits_{j\ne i} l_j$};
		
		\draw[decorate,decoration={brace,amplitude=5pt}] (2,0.55) -- (8,0.55);
		\node[lbl] at (5,0.95) {total interval \(L\)};
		
		\coordinate (A) at (5.1,0);
		\node[pt,fill=blue] at (A) {};
		\node[lbl,below=8pt,align=center] at (A)
		{$\displaystyle \bar{l}_i=\sum_{j\ne i}\frac{x_j}{S_i}l_j$};
		
		\coordinate (B) at (9.3,0);
		\node[pt,fill=red] at (B) {};
		\node[lbl,below=8pt,align=center] at (B)
		{$\displaystyle l_i'
			=\bar{l}_i+\frac{k}{S_i}$};
		
		\draw[arr,red] (A) to[bend left=18] node[lbl,above]
		{$\displaystyle +\frac{k}{S_i}$} (B);
		
	\end{tikzpicture}
	\caption{Mutation of comparison $n$-tuple at $\Delta_i$}
	\label{fig:comparison-mutation}
\end{figure}

With such phenomenon in mind, together with some property for the Fibonacci sequence, in \cite{CJ25}, the authors proved the boundedness of the comparison points in the case of $n = 3$.
Though the strategy of proof in \cite{CJ25} still works in the case of $n > 3$, in the next subsection, we choose another way of proving the boundedness by introducing a new sequence of points.

\subsection{Boundedness of the comparison points}

In this subsection, we show the boundedness of the comparison points in $\mcL_n$.
Before that, let us fix the following notations:

\begin{notations}\label{Notation of comparison points}
  Under the \Cref{generic assumption}, we take any generic sequence $\mfw \in \mbT_n$, namely $\mfw = [w_1, w_2, \dots]$.
  Following the previous subsection, a sequence of points associated to $\mfw$ in the classical Euclid tree $\mcE_{n}$ is denoted by:
  \begin{align*}
    ({}_0 x_1,{}_0 x_2,\dots,{}_0 x_n) \xrightarrow{\mcM_{w_1;0}} ({}_1 x_1, {}_1 x_2, \dots, {}_1 x_n) \xrightarrow{\mcM_{w_2;0}} ({}_2 x_1, {}_2 x_2, \dots, {}_2 x_n) \xrightarrow{\mcM_{w_3;0}} \cdots
  \end{align*}
  Similarly, a sequence of points associated to $\mfw$ in the $k$-deformed Euclid tree $\mcE_{n,k}$ is denoted by:
  \begin{align*}
    ({}_0 y_1,{}_0 y_2,\dots,{}_0 y_n) \xrightarrow{\mcM_{w_1;k}} ({}_1 y_1, {}_1 y_2, \dots, {}_1 y_n) \xrightarrow {\mcM_{w_2;k}} ({}_2 y_1, {}_2 y_2, \dots, {}_2 y_n) \xrightarrow{\mcM_{w_3;k}} \cdots   
  \end{align*}
  Thus, a sequence of comparison points associated to $\mfw$ in the comparison tree $\mcL_n$ can be denoted by:
  \begin{align*}
    ({}_0 l_1,{}_0 l_2,\dots,{}_0 l_n) \xrightarrow{\Delta_{w_1 ; k}} ({}_1 l_1, {}_1 l_2, \dots, {}_1 l_n) \xrightarrow{\Delta_{w_2 ; k}} ({}_2 l_1, {}_2 l_2, \dots, {}_2 l_n) \xrightarrow{\Delta_{w_3 ; k}} \cdots
  \end{align*}
  From now on, we tend to study the behavior of the sequence of points $\{({}_i l_1, {}_i l_2, \dots, {}_i l_n)\}_{i=0}^{\infty}$.

  Note that as stated in \Cref{Notation for Euclid}, since the mutation rule is given as mutating only one coordinate in each point, we essentially obtain a sequence of numbers $\{{}_i\mathfrak{l}\}_{i=1}^{\infty}$.
  Finally, we denote the total interval associated to the point $({}_i l_1, {}_i l_2, \dots, {}_i l_n)$ by ${}_i L$.
\end{notations}

In the case \(k=0\), the boundedness is immediate.

\begin{proposition}\label{boundness to point without moving}
  Take any generic sequence $\mfw = [w_1, w_2, \dots] \in \mbT_n$, under the condition in \Cref{Notation of comparison points} and $k=0$, 
  we still denote the sequence of comparison points associated to $\mfw$ in the comparison tree $\mcL_n$ by:
  \begin{align*}
    ({}_0 l_1,{}_0 l_2,\dots,{}_0 l_n) \xrightarrow{\Delta_{w_1}} ({}_1 l_1, {}_1 l_2, \dots, {}_1 l_n) \xrightarrow{\Delta_{w_2}} ({}_2 l_1, {}_2 l_2, \dots, {}_2 l_n) \xrightarrow{\Delta_{w_3}} \cdots
  \end{align*}
  Then the sequence of comparison numbers $\{{}_i \mathfrak{l}\}_{i=1}^{\infty}$ is bounded above.
\end{proposition}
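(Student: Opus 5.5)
With $k=0$ the mutation \eqref{mutation in comparison tree in general} loses its shift term. It becomes
\[
  {}_{t}l_{w_t} \;=\; \sum_{j\neq w_t}\frac{{}_{t-1}x_j}{{}_{t-1}S_{w_t}}\,{}_{t-1}l_j,
  \qquad {}_{t-1}S_{w_t}=\sum_{j\neq w_t}{}_{t-1}x_j .
\]
So the new coordinate is a genuine convex combination of the other $n-1$ coordinates of the previous comparison point. The weights are nonnegative and sum to $1$. When all ${}_{t-1}x_j$ with $j\neq w_t$ vanish the weights are undefined, and that case is addressed in the last paragraph. The plan is to show that the maximum of the comparison coordinates never increases along the chain.

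Set $M_t:=\max_{1\le j\le n}{}_t l_j$. The convex-combination formula gives ${}_t l_{w_t}\le \max_{j\neq w_t}{}_{t-1}l_j\le M_{t-1}$. Every other coordinate is unchanged, since ${}_t l_j={}_{t-1}l_j\le M_{t-1}$ for $j\neq w_t$. Hence $M_t\le M_{t-1}$, and by induction $M_t\le M_0$ for all $t$. Since ${}_t\mathfrak{l}={}_t l_{w_t}\le M_t\le M_0$, the sequence $\{{}_t\mathfrak{l}\}_{t\ge1}$ is bounded above by $M_0$. The same argument with minima shows it is also bounded below by $\min_j {}_0 l_j$, although only the upper bound is asked for.

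The only real obstacle is making the initial comparison point well defined. The initial points are allowed in $\mbZ_{\ge0}^n$, so some ${}_0x_j$ may be $0$ and ${}_0 l_j$ is then undefined. I would handle this in one of two ways. The first is to assume, as in the intended application with initial point $(1,\dots,1)$, that all ${}_0x_j>0$; then every ${}_t x_j>0$ because mutations only produce sums of positive entries. The second is to note that, by \Cref{generic assumption}, after finitely many steps every direction has been mutated. From then on all coordinates of both chains are positive, provided some initial coordinate is positive; the one that is positive feeds into every later mutation. One then starts the argument at that step $T$, with $M_T$ finite, and the finitely many earlier terms ${}_t\mathfrak{l}$ with $t\le T$ are finite as well. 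Either way the monotonicity of $M_t$ finishes the proof.
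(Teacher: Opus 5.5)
Your argument is correct and is essentially the one the paper intends when it calls the case $k=0$ immediate. Each new comparison coordinate is a weighted average of the others, so $\max_j {}_t l_j$ is non-increasing and $\min_j {}_t l_j$ is non-decreasing. This is the same nesting ${}_{i+1}L\subseteq {}_iL$ the paper uses in the proof of \Cref{converge to point without moving}, and your two-sided bound is what is actually needed when this proposition is applied to the auxiliary sequence in \Cref{boundedness of comparison points}.

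The only slip is in your side remark on zero initial entries. A single positive coordinate need not feed every later mutation: from $(1,0,\dots,0)$, mutating in direction $1$ gives the zero vector. So keep your first alternative, positive initial coordinates, which is the setting the paper tacitly uses.
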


We now prove boundedness in the general case by comparison with an auxiliary sequence.
\begin{proposition}[Boundedness of comparison points]
	\label{boundedness of comparison points}
	With the above notation, let \(\mfw=[w_1,w_2,\dots]\in \mbT_n\) be any sequence.
	Then \(\{{}_i\mathfrak l\}_{i=0}^{\infty}\) is bounded.
\end{proposition}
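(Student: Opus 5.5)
The plan is to eliminate the shift $k$ by a translation, which turns the $k$-deformed Euclid chain into a classical one. Then boundedness reduces to the case $k=0$, i.e.\ to \Cref{boundness to point without moving}. Throughout, assume the initial points of $\mcE_n$ and $\mcE_{n,k}$ are positive, so that the comparison numbers are defined; we work with $n\ge 3$ as in the standing assumptions.

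\emph{Lower bound.} All ${}_t y_j$ and ${}_t x_j$ are positive, so every ${}_t\mathfrak l$ is positive. In fact, by \eqref{mutation in comparison tree in general} each new coordinate is a convex combination of the other coordinates plus the nonnegative term $k/S_i$. By induction, the minimum of the coordinates of the comparison point never decreases. Hence ${}_t\mathfrak l\ge \min_j {}_0 l_j>0$ for all $t$.

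\emph{Upper bound: auxiliary sequence.} Set $c\coloneqq k/(n-2)$ and define ${}_t z_j\coloneqq {}_t y_j+c$. If direction $i$ is mutated at step $t+1$, then
\[
{}_{t+1}z_i=\sum_{j\neq i}{}_t y_j+k+c=\sum_{j\ne i}{}_t z_j-(n-1)c+k+c=\sum_{j\ne i}{}_t z_j ,
\]
because $(n-2)c=k$. The other coordinates are unchanged. Thus $({}_t z_1,\dots,{}_t z_n)$ is exactly the classical Euclid chain $\mcE_n$ along $\mfw$, started at $(\beta_1+c,\dots,\beta_n+c)$; this may be a non-integral point, but the argument only uses positivity.

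\emph{Comparison with the $k=0$ case.} Compare this chain with the chain $x$. The comparison numbers ${}_t z_j/{}_t x_j$ are governed by the rule \eqref{mutation in comparison tree in general} with $k=0$. There each new coordinate is a weighted average of the others, so the maximum coordinate is nonincreasing; this is the content of \Cref{boundness to point without moving}, whose proof only uses positivity. Hence ${}_t z_j/{}_t x_j\le M\coloneqq\max_j(\beta_j+c)/\alpha_j$ for all $t$ and $j$. Since $y\le z$ coordinatewise, we get ${}_t\mathfrak l\le M$, which completes the upper bound.

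\emph{Main obstacle.} The delicate point is $n=2$, where $c$ is undefined. This case can arise after the reduction preceding \Cref{generic assumption}, which lowers $n$ and enlarges $k$. For $n=2$ a generic sequence alternates, and the chains are explicit: $y$ grows by $y_{\text{other}}+k$ at each step, like an arithmetic progression with difference roughly $\max(\beta)+k$ per two steps. Meanwhile $x$ grows linearly in the same way with $k=0$ and positive starting values. So the ratio is bounded by an elementary linear estimate, which I would verify directly.
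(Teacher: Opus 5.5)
Your proof is correct and is essentially the paper's: your ratio ${}_t z_j/{}_t x_j$ is exactly the paper's auxiliary quantity ${}_t u_j={}_t l_j+\frac{k}{(n-2)\,{}_t x_j}$, which obeys the $k=0$ comparison rule. The only difference is the last step: you conclude via $y\le z$, while the paper uses $\bigl|{}_t l_j-{}_t u_j\bigr|\le \frac{k}{n-2}$, which needs ${}_t x_j\ge 1$. Drop the $n=2$ paragraph: it is unnecessary under the standing assumption $n\ge 3$, and its claim is false, since for $n=2$ the classical chain is stationary after one step ($\mcM_1(a,b)=(b,b)$) while the $k$-deformed chain's new coordinate grows by $k$ per step, so the comparison numbers are unbounded when $k>0$.
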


\begin{proof}
	For each \(i\ge 0\) and \(1\le j\le n\), we define an auxiliary sequence $\{({}_i u_1,{}_i u_2,\dots,{}_i u_n)\}_{i=0}^{\infty}$ by
	\[
	{}_i u_j:={}_i l_j+\frac{k}{(n-2)\,{}_i x_j}.
	\]
	We claim that the sequence $\{({}_i u_1,{}_i u_2,\dots,{}_i u_n)\}_{i=0}^{\infty}$ satisfies exactly the same mutation rule as the comparison points in the case \(k=0\).
	
    Set ${}_i S_{w_{i+1}} \coloneqq \sum_{j\ne w_{i+1}} {}_i x_j$.
	Since ${}_{i+1} x_{w_{i+1}}={}_i S_{w_{i+1}}$ by the mutation rule in the classical Euclid tree \(\mcE_n\),
we have
	\begin{align*}
		{}_{i+1}u_{w_{i+1}}
		&=
		{}_{i+1}l_{w_{i+1}}
		+\frac{k}{(n-2)\,{}_{i+1}x_{w_{i+1}}} \\
		&=
		\sum_{j\ne w_{i+1}}
		\frac{{}_i x_j}{{}_i S_{w_{i+1}}}\,{}_i l_j
		+\frac{k}{{}_i S_{w_{i+1}}}
		+\frac{k}{(n-2)\, {}_i S_{w_{i+1}}} \\
		&=
		\sum_{j\ne w_{i+1}}
		\frac{{}_i x_j}{{}_i S_{w_{i+1}}}
		\left(
		{}_i l_j+\frac{k}{(n-2)\,{}_i x_j}
		\right) \\
		&=
		\sum_{j\ne w_{i+1}}
		\frac{{}_i x_j}{{}_i S_{w_{i+1}}}\,{}_i u_j.
	\end{align*}
	For \(j\ne w_{i+1}\), neither \({}_i x_j\) nor \({}_i l_j\) changes under the mutation, and therefore
	\[
	{}_{i+1}u_j={}_i u_j.
	\]
	Thus the sequence \(\{({}_i u_1,\dots,{}_i u_n)\}_{i=0}^{\infty}\) follows exactly the comparison mutation rule in the case \(k=0\).
	
	By Proposition \ref{boundness to point without moving}, the sequence $\{({}_i u_1,{}_i u_2,\dots,{}_i u_n)\}_{i=0}^{\infty}$
	is bounded. Finally, since \({}_i x_j\ge 1\) for all \(i\) and \(j\), we have
	\[
	\bigl|{}_i l_j-{}_i u_j\bigr|
	=
	\frac{k}{(n-2)\,{}_i x_j}
	\le
	\frac{k}{n-2}.
	\]
	Hence the sequence $\{({}_i l_1,{}_i l_2,\dots,{}_i l_n)\}_{i=0}^{\infty}$ is also bounded.
\end{proof}

\begin{remarkx}
	The above argument also works when the parameter \(k\) is allowed to vary at each mutation step, provided that $0\le k_i\le k_c$ for all $i\ge 1$ and some constant \(k_c>0\).
	
	More precisely, let \(\mfw=[w_1,w_2,\dots]\in\mbT_n\) be any mutation sequence and 
	\[
	\{({}_i l_1,{}_i l_2,\dots,{}_i l_n)\}_{i=0}^{\infty}
	\]
	 be the associated sequence of comparison points satisfying
   \[
   ({}_i l_1,\dots,{}_i l_n)
   \xrightarrow{\Delta_{w_{i+1}}}
   ({}_i l_1,\dots,{}_i l_{w_{i+1}-1},
   \sum_{j\ne w_{i+1}}\frac{{}_i x_j}{{}_i S_{w_{i+1}}}\,{}_i l_j+\frac{k_{i+1}}{{}_i S_{w_{i+1}}},
   {}_i l_{w_{i+1}+1},\dots,{}_i l_n),
   \]
   We define an auxiliary sequence
   \[
   \{({}_i u_1,{}_i u_2,\dots,{}_i u_n)\}_{i=0}^{\infty}
   \]
   as follows. Its initial point is given by ${}_0 u_j:={}_0 l_j+\frac{k_c}{(n-2)\,{}_0 x_j}$ for $1\le j\le n$, For \(i\ge 0\), the sequence evolves according to the comparison mutation rule in the case \(k=0\), namely,
   \[
   ({}_i u_1,\dots,{}_i u_n)
   \xrightarrow{\Delta_{w_{i+1}}}
   ({}_i u_1,\dots,{}_i u_{w_{i+1}-1},
   \sum_{j\ne w_{i+1}}\frac{{}_i x_j}{{}_i S_{w_{i+1}}}\,{}_i u_j,
   {}_i u_{w_{i+1}+1},\dots,{}_i u_n).
   \]
   We claim that
   \[
   {}_i u_j\ge {}_i l_j+\frac{k_c}{(n-2)\,{}_i x_j}
   \qquad
   (i\ge 0,\ 1\le j\le n).
   \]
   This is clear for \(i=0\). Assume that it holds for some \(i\). If \(j\ne w_{i+1}\), then nothing changes, and the inequality is immediate. For \(j=w_{i+1}\), we have
   \begin{align*}
   	{}_{i+1}u_{w_{i+1}}
   	&=
   	\sum_{j\ne w_{i+1}}\frac{{}_i x_j}{{}_i S_{w_{i+1}}}\,{}_i u_j \\
   	&\ge
   	\sum_{j\ne w_{i+1}}\frac{{}_i x_j}{{}_i S_{w_{i+1}}}
   	\left(
   	{}_i l_j+\frac{k_c}{(n-2)\,{}_i x_j}
   	\right) \\
   	&=
   	\sum_{j\ne w_{i+1}}\frac{{}_i x_j}{{}_i S_{w_{i+1}}}\,{}_i l_j
   	+\frac{(n-1)k_c}{(n-2) {}_i S_{w_{i+1}}} \\
   	&\ge
   	\sum_{j\ne w_{i+1}}\frac{{}_i x_j}{{}_i S_{w_{i+1}}}\,{}_i l_j
   	+\frac{k_{i+1}}{{}_i S_{w_{i+1}}}
   	+\frac{k_c}{(n-2)\, {}_i S_{w_{i+1}}} \\
   	&=
   	{}_{i+1}l_{w_{i+1}}
   	+\frac{k_c}{(n-2)\,{}_{i+1}x_{w_{i+1}}},
   \end{align*}
   since \(k_{i+1}\le k_c\) and \({}_{i+1}x_{w_{i+1}} = {}_i S_{w_{i+1}}\). 
	 Hence the claim follows by Proposition \ref{boundedness of comparison points}.
\end{remarkx}

Once the boundedness of the sequence of comparison numbers has been established, together with the convergence of the total interval introduced below, we can derive the asymptotic relation between the \(k\)-deformed Euclid tree and the classical Euclid tree.

In \cite{CJ25}, we proved the convergence directly in the case \(n=3\). However, when \(n\ge 4\), the same method no longer works. To overcome this difficulty, we give a new (combinatorial) approach to prove the convergence of the total interval.

\subsection{Convergence of the total interval}
In this subsection, we show that the length of the total interval associated with the sequence of comparison points converges to \(0\).

We first consider the simpler case in which the comparison tree is defined by two classical \(n\)-branched Euclid trees starting from two different initial points. 
In this setting, using the same notation as in \Cref{Notation of comparison points} with \(k=0\), the mutation of the comparison point in direction \(i\) at a given position simplifies to the following:
\[
	(l_1,l_2,\dots,l_n)
	\xrightarrow{\Delta_i}
	\biggl(
	l_1,\dots,l_{i-1},
	\sum_{j\ne i}\frac{x_j}{S_i}l_j,
	l_{i+1},\dots,l_n
	\biggr).
	\label{mutation in comparison tree}
\]
where $S_i:=x_1+\cdots+\widehat{x_i}+\cdots+x_n$.
The following proposition shows that the sequence of total intervals associated to the sequence of comparison points converges to a point under the above mutation rule.
The proof of this proposition is purely combinatorial, and it is the key to show the convergence of the total interval in the general case when $k>0$.

\begin{proposition}\label{converge to point without moving}
	Let $k=0$ and \(\mfw=[w_1,w_2,\dots]\in\mbT_n\) be a generic sequence. With the notation of
	(\ref{Notation of comparison points}), let
	\[
	({}_0 l_1,\dots,{}_0 l_n)\xrightarrow{\Delta_{w_1}}
	({}_1 l_1,\dots,{}_1 l_n)\xrightarrow{\Delta_{w_2}}
	({}_2 l_1,\dots,{}_2 l_n)\xrightarrow{\Delta_{w_3}}\cdots
	\]
	be the associated sequence of comparison points in the comparison tree \(\mcL_n\).
	For each \(i\ge 0\), let \({}_i L\) denote the total interval associated to
	\(({}_i l_1,\dots,{}_i l_n)\).
	
	Then the sequence of total intervals \(\{{}_iL\}_{i=0}^{\infty}\) converges to a point.
\end{proposition}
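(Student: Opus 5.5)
The idea is to replace the direction-dependent total interval by a nested family of intervals and then prove a contraction. For each $i\ge 0$ let ${}_iJ:=[\min_j {}_il_j,\max_j {}_il_j]$ be the hull of all $n$ coordinates. By \eqref{mutation in comparison tree}, the new coordinate ${}_{i+1}l_{w_{i+1}}$ is a convex combination of the untouched coordinates ${}_il_j$, $j\neq w_{i+1}$. Hence ${}_{i+1}J\subseteq {}_iJ$ and ${}_{i+1}L\subseteq {}_iJ$. Since ${}_iL\subseteq{}_iJ$, it suffices to prove $|{}_iJ|\to 0$. The nested compact intervals ${}_iJ$ then shrink to a single point, and the total intervals converge to that same point.

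The only quantitative input is the Euclid weight structure. In $\mcE_n$ the new coordinate equals the sum of the other $n-1$ coordinates, so the most recently mutated coordinate is always at least the sum of all the others. Consequently, at the mutation $w_{s+1}\neq w_s$ the weight placed on ${}_sl_{w_s}$ (the previous new value) is at least $1/2$. This follows from ${}_sS_{w_{s+1}}-{}_sx_{w_s}\le {}_sx_{w_s}$.

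Fix a time $t$ and write $m\le M$ for the endpoints of ${}_tJ$. By genericity (\Cref{generic assumption}) there is a finite $r=r(t)$ such that every direction occurs among $w_{t+1},\dots,w_{t+r}$. By the symmetry $l\mapsto -l$ we may assume the first new value ${}_{t+1}l_{w_{t+1}}$ lies in the lower half of ${}_tJ$. Let $u_s$ denote the upper bound for the $s$-th new value. The weight estimate gives the recursion $M-u_{s+1}\ge \tfrac12 (M-u_s)$. Each new value in the block is therefore at most $M-2^{-r}(M-m)/2$. At time $t+r$ all coordinates have been refreshed within the block, so
\[
|{}_{t+r}J|\le \bigl(1-2^{-r-1}\bigr)\,|{}_tJ|.
\]

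The main obstacle is that $r(t)$ is not uniformly bounded for a generic sequence, so the product $\prod_b(1-2^{-r_b-1})$ over consecutive blocks need not tend to $0$. To overcome this, I would refine the estimate by tracking coordinates in order of recency rather than all at once. Since each newest $x$ dominates the sum of the rest, the recent values form a geometric-weight chain. Inside a long block that only alternates among a few directions, those coordinates should themselves contract toward a common value. In that stretch the fixed, rarely mutated coordinates carry negligible weight and do not spoil the contraction.

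Concretely, I would try the following. First, let $A$ be the set of directions used in a stretch. Show that the hull of the coordinates in $A$ contracts by a factor bounded away from $1$ after every $|A|$ distinct moves; this is an induction on $n$, using the reduction to $\mbT_{n-1}$ described before \Cref{generic assumption}. Second, show that when a stale direction is finally mutated, its new value lands, up to weight at most $1/2$ spread elsewhere, near this contracted cluster. This yields a contraction of ${}_tJ$ by a uniform constant $c<1$ at each completion of a block, independent of $r(t)$. If that fails, the fallback is to prove that $\sum_b 2^{-r_b}=\infty$ is not needed, by working directly with the potential $\Phi_i:=\max_j {}_il_j-\min_j{}_il_j$ restricted to the $n-1$ most recently mutated coordinates.
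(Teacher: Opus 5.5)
Your reduction to the hull ${}_iJ$ of all $n$ coordinates is correct. So is the weight bound: at step $s+1$ the previous new value gets weight ${}_sx_{w_s}/(2\,{}_sx_{w_s}-{}_sx_{w_{s+1}})\ge\tfrac12$. The block estimate $|{}_{t+r}J|\le(1-2^{-r-1})|{}_tJ|$ also holds. But, as you yourself observe, $r(t)$ is unbounded for a generic sequence, so these bounds do not give $|{}_iJ|\to0$. The step that is supposed to close the argument, a contraction by a constant $c<1$ per block independent of $r$, is only announced (``I would try'', plus a fallback), not proved.

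The mechanism you sketch for that step also fails as stated. Take $n=4$ and a stretch alternating $1,2$ with $3,4$ frozen, and put $c=x_3+x_4$ and $m=(x_3l_3+x_4l_4)/c$. If $2$ was just mutated, then $x_2=x_1+c$ and $l_2=(x_1l_1+cm)/(x_1+c)$. Mutating $1$ then gives $l_1'-l_2=\frac{x_1}{x_1+2c}(l_2-l_1)$. So per two moves the hull of the active coordinates shrinks only by the factor $\frac{x_1}{x_1+2c}\cdot\frac{x_1+c}{x_1+3c}$, which tends to $1$. And $x_1/c$ does become large, since $x_1,x_2$ grow only linearly (by $c$ per step). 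For the same reason the frozen coordinates receive total weight $c/(x_2+c)\approx 1/k$ at the $k$-th step of the stretch. This is not summable, so they are far from negligible; they are exactly what fixes the limit $m$ of the active values. The induction on $n$ via the reduction preceding the generic assumption does not rescue this. Freezing a coordinate produces a \emph{deformed} comparison problem in which the frozen data act as shifts. In any case, an asymptotic statement in $n-1$ variables gives no uniform estimate on a finite stretch.

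What is missing is a way to use the Euclid weights without any rate, which is what the paper does. It argues by contradiction. The nested hulls converge to $L=[l_\alpha,l_\beta]$; if $|L|>0$, fix $\varepsilon<|L|$ and $N$ with ${}_tl_{\min}>l_\alpha-\varepsilon$ for $t\ge N$. The inputs are ${}_{t+1}x_{w_{t+1}}=\sum_{j\ne w_{t+1}}{}_tx_j$ and the monotonicity ${}_ix_{w_i}\le{}_{i+1}x_{w_{i+1}}$.

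Suppose the new value at time $t_0+1$ (with $t_0\ge N$) falls below $l_\alpha$, and write $X_0={}_{t_0+1}x_{w_{t_0+1}}$. Then the weighted excess $\sum x_j(l_j-l_\beta)$ over the coordinates above $l_\beta$ is less than $\varepsilon X_0$. This excess cannot grow before the first time $t_1+1$ at which a new value exceeds $l_\beta$. At that time it must exceed $|L|\sum x_j$, taken over the coordinates below $l_\alpha$. Hence those coordinates have total weight less than $X_0$, so none of them was refreshed during $(t_0,t_1]$, and the set of indices lying below $l_\alpha$ strictly shrinks.

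Genericity forces such alternations to recur forever. Otherwise all coordinates below $l_\alpha$, or all above $l_\beta$, are eventually refreshed away, contradicting ${}_tl_{\min}<l_\alpha<l_\beta<{}_tl_{\max}$. Infinitely many strict shrinkings are impossible for subsets of $\{1,\dots,n\}$. Your proof needs an argument of this kind, driven by the growth of the new weights rather than by a per-block contraction rate.
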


\begin{proof}
Write ${}_i L=[{}_i l_{\min},\,{}_i l_{\max}]$ and $|{}_i L|:={}_i l_{\max}-{}_i l_{\min}$, where ${}_i l_{\max} = \max \{{}_i l_1, {}_i l_2, \dots, {}_i l_n\}$ and ${}_i l_{\min} = \min \{{}_i l_1, {}_i l_2, \dots, {}_i l_n\}$.
Since in the case \(k=0\) every new coordinate is obtained as a weighted average of the
remaining coordinates, we have
\[
{}_{i+1} L\subseteq {}_i L
\qquad\text{for all }i\ge 0.
\]
\(\{{}_i L\}_{i\ge 0}\) converges to some interval \(L = [l_\alpha, l_\beta]\).
We claim that \(|L|=0\). Suppose, for contradiction, that \(|L|>0\).
	
	Fix \(0<\varepsilon<|L|\). 
  Since \({}_i L\to L\), there exists \(N\in\mathbb N\) such that for all
	\(t\ge N\),
	\[
	0<l_\alpha-{}_t l_{\min}<\varepsilon
	\qquad\text{and}\qquad
	0<{}_t l_{\max}-l_\beta<\varepsilon.
	\]
	For each \(t\ge N\), let \(A_{t;<}:= \{  {}_t l_{j} \mid {}_t l_{j} <l_{\alpha} \}\) and \(A_{t;>} := \{  {}_t l_{j} \mid {}_t l_{j} >l_{\beta} \} \).
	
	We now establish the key observation.
	
	\medskip

  \noindent
  \textbf{Claim.}
  Suppose that \(t_0\ge N\), and that the mutation \(\Delta_{w_{t_0+1}}\) produces a new coordinate lying to the left of \(l_\alpha\), that is,
  \[
    {}_{t_0+1}l_{w_{t_0+1}} \in A_{t_0+1;<}.
  \]
  Let \(t_1>t_0\) be the first step after \(t_0\) at which a newly produced coordinate lies to the right of \(l_\beta\), that is,
  \[
    {}_{t_1+1}l_{w_{t_1+1}} \in A_{t_1+1;>}.
  \]
  Then the set of coordinates lying to the left of \(l_\alpha\) strictly decreases:
  \[
    A_{t_1;<}\subsetneq A_{t_0;<}.
  \]
	
	\medskip
	\noindent
	\emph{Proof of the claim.} 
	For \(j=0,1\), let
	\[
	\{i_1,\dots,i_{n-1}\}
	=
	\{1,\dots,n\}\setminus\{w_{t_j+1}\},
	\]
	and reorder them so that
	\[
	{}_{t_j}l_{i_1},\dots,{}_{t_j}l_{i_{h_j}}<l_\alpha\leq {}_{t_j}l_{i_{h_j+1}},\dots,{}_{t_j}l_{i_{m_j-1}}\leq l_\beta<{}_{t_j}l_{i_{m_j}},\dots,{}_{t_j}l_{i_{n-1}},
	\]
	for suitable integers \(h_j\) and \(m_j\).
	Since \({}_{t_0+1} l_{w_{t_0+1}} \in A_{t_0+1;<}\), we have 
	\[
	{}_{t_0+1}l_{w_{t_0+1}}  = \sum_{i\neq w_{t_0+1}}\frac{{}_{t_0}x_i}{{}_{t_0+1}x_{w_{t_0+1}}}{}_{t_0}l_i < l_\alpha.
	\]
	Equivalently, 
	\begin{align}
		\sum_{r=h_0+1}^{n-1} {}_{t_0}x_{i_r}\bigl({}_{t_0}l_{i_r}-l_\alpha\bigr)
	<
	\sum_{r=1}^{h_0} {}_{t_0}x_{i_r}\bigl(l_\alpha-{}_{t_0}l_{i_r}\bigr)
	\end{align}
	since ${}_{t_0+1}x_{w_{t_0+1}} = \sum_{i\neq w_{t_0+1}} {}_{t_0}x_i$.
  From \(l_\alpha-{}_{t_0}l_{i_r}<\varepsilon < |L|\) for \(1\le r\le h_0\), it follows that
	\[
	\sum_{r=1}^{h_0} {}_{t_0}x_{i_r}\bigl(l_\alpha-{}_{t_0}l_{i_r}\bigr)
	<
	\varepsilon \sum_{r=1}^{h_0} {}_{t_0}x_{i_r}
	< {}_{t_0+1}x_{w_{t_0+1}} |L|.
	\]
	Discarding the nonnegative middle contribution, we obtain
	\begin{align}\label{in;t_0}
	\sum_{r=m_0}^{n-1} {}_{t_0}x_{i_r}\bigl({}_{t_0}l_{i_r}-l_\beta\bigr)
	\leq \sum_{r=m_0}^{n-1} {}_{t_0}x_{i_r}\bigl({}_{t_0}l_{i_r}-l_\alpha\bigr) <\sum_{r=1}^{h_0} {}_{t_0}x_{i_r}\bigl(l_\alpha-{}_{t_0}l_{i_r}\bigr)
	<  {}_{t_0+1}x_{w_{t_0+1}}|L| 
	\end{align}
	since 	\({}_{t_0+1}l_{w_{t_0+1}} <l_\alpha\).
	
	On the other hand, we have \({}_{t_1+1}l_{w_{t_1+1}} \in A_{t_1+1;>}\). Similarly, it implies that 
	\begin{align}\label{in;t_1}
		\sum_{r=m_1}^{n-1} {}_{t_1}x_{i_r}\bigl({}_{t_1}l_{i_r}-l_\beta\bigr)
	>
	\sum_{r=1}^{m_1-1} {}_{t_1}x_{i_r}\bigl(l_\beta-{}_{t_1}l_{i_r}\bigr)
	\geq
	\sum_{r=1}^{h_1} {}_{t_1}x_{i_r}\bigl(l_\beta-{}_{t_1}l_{i_r}\bigr)
	.
	\end{align}
	From the hypothesis in the claim, we have ${}_{t}l_{{w_t}} < l_\beta$ for every $t_0< t \leq t_1$, then
	\[
	\sum_{r=m_0}^{n-1} {}_{t_0}x_{i_r}\bigl({}_{t_0}l_{i_r}-l_\beta\bigr)
	\geq\sum_{r=m_1}^{n-1} {}_{t_1}x_{i_r}\bigl({}_{t_1}l_{i_r}-l_\beta\bigr).
	\]
	by (\ref{in;t_0}) and (\ref{in;t_1}).

	Therefore, we have 
	\begin{align}\label{in:t_0t_1}
			{}_{t_0+1} x_{w_{t_0+1}}|L| > \sum_{r=1}^{h_1} {}_{t_1} x_{i_r} \bigl( l_\beta-{}_{t_1} l_{i_r} \bigr)
	\end{align}
	
	By the mutation rule of classical Euclid tree, ${}_{i} x_{w_{i}}\leq{}_{i+1} x_{w_{i+1}}$ for every $i\geq 1$. 
	If ${}_{t}l_{{w_t}}\in A_{t_1;<}$ for some $t_0\leq t \leq t_1$, then (\ref{in:t_0t_1}) does not hold since $l_\beta-{}_{t_1}l_{i_r}>L$ for every $1\leq r \leq h_1$, so $A_{t_1; <} \subset A_{t_0;<}$.
	
	If $A_{t_1; <} = A_{t_0;<}$, then 
	\[
	\sum_{r=1}^{h_0} {}_{t_0}x_{i_r}\bigl(l_\alpha-{}_{t_0}l_{i_r}\bigr) > \sum_{r=1}^{h_1} {}_{t_1}x_{i_r}\bigl(l_\beta-{}_{t_1}l_{i_r}\bigr)
	= \sum_{r=1}^{h_0} {}_{t_0}x_{i_r}\bigl(l_\beta-{}_{t_0}l_{i_r}\bigr)
	\]
	by (\ref{in;t_0}) and (\ref{in;t_1}). This is impossible. Hence, $A_{t_1; <} \subsetneq A_{t_0;<}$, that is, the claim holds. 

	We now derive the contradiction from $|L|>0$. Suppose that \(t_0\ge N\), and that the mutation $\Delta_{w_{t_0+1}}$ produces the new coordinate
	\({}_{t_0+1}l_{w_{t_0+1}} \in A_{t_0+1;<}\).  There are two possibilities.
	
	\noindent
	\emph{Case 1.} For all \(t>t_0\), ${}_t l_{w_t}\leq l_\beta$.
Since \(\mfw\) is generic, every direction is mutated infinitely many times. Hence there exists an infinite sequence $t_1<t_2<t_3<\cdots $
such that
\[
A_{t_{i+1};>} \subsetneq A_{t_i;>}
\qquad\text{for all } i\ge 1.
\]
This is impossible, because each \(A_{t_i;>}\) is nonempty.
	
\noindent
\emph{Case 2.} There exists \(t_1>t_0\) such that ${}_{t_1+1} l_{w_{t_1+1}} > l_\beta$.
Choose \(t_1\) to be the first such number. Then, by the claim proved above,
$A_{t_1;<}\subsetneq A_{t_0;<}$. 

If for all \(t>t_1\), ${}_t l_{w_t}\geq l_\alpha$, then we are in the symmetric version of Case~1, which is impossible. Hence there exists \(t_2>t_1\) such that ${}_{t_2+1}l_{w_{t_2+1}}<l_\alpha$.
Taking \(t_2\) to be the first such number, the symmetric claim yields $A_{t_2;>}\subsetneq A_{t_1;>}$. In particular, we have $A_{t_2;<}= A_{t_1;<}$.

Continuing in this way, we obtain an increasing sequence
\[
t_0<t_1<t_2<t_3<\cdots
\]
such that the process alternates between producing a new coordinate which is strictly greater than $l_\beta$ or strictly less than $l_\alpha$, and at each such step one of the two sets \(A_{t;<}\) or \(A_{t;>}\) strictly decreases. More precisely,
\begin{align*}
A_{t_{2r+1};<}\subsetneq A_{t_{2r};<}=A_{t_{2r-1};<},
\ 
A_{t_{2r+2};>}\subsetneq A_{t_{2r+1};>} = A_{t_{2r};>}, \ \dots
\end{align*}
whenever these indices are defined. This is a contradiction.
	
This contradiction shows that our assumption \(|L|>0\) is false. 
Therefore $|L|=0$. Equivalently, the sequence of total intervals \(\{{}_i L\}_{i=0}^{\infty}\) converges to a point.
\end{proof}

By the above proposition, we can also prove the following corollary for the case \(k\neq 0\), which states that the total interval associated with the sequence of comparison points converges to a point.

\begin{corollary}[Convergence of total interval]\label{convergence of total interval}
  Take any generic sequence $\mfw = [w_1, w_2, \dots] \in \mbT_n$, under the condition in \Cref{Notation of comparison points}, 
  we still denote the sequence of comparison points associated to $\mfw$ in the comparison tree $\mcL_n$ by:
  \begin{align*}
    ({}_0 l_1,{}_0 l_2,\dots,{}_0 l_n) \xrightarrow{\Delta_{w_1}} ({}_1 l_1, {}_1 l_2, \dots, {}_1 l_n) \xrightarrow{\Delta_{w_2}} ({}_2 l_1, {}_2 l_2, \dots, {}_2 l_n) \xrightarrow{\Delta_{w_3}} \cdots
  \end{align*}
  and the total interval associated to the point $({}_i l_1, {}_i l_2, \dots, {}_i l_n)$ by ${}_i L$.

  Then the sequence of total intervals $\{{}_i L\}_{i=1}^{\infty}$ converges to a point.
\end{corollary}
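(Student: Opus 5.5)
The plan is to reduce to \Cref{converge to point without moving} through the auxiliary sequence already used in the proof of \Cref{boundedness of comparison points}. For each \(i\ge 0\) and \(1\le j\le n\), set
\[
{}_i u_j:={}_i l_j+\frac{k}{(n-2)\,{}_i x_j}.
\]
The computation in that proof shows that \(\{({}_i u_1,\dots,{}_i u_n)\}_{i\ge 0}\) obeys exactly the \(k=0\) comparison mutation rule. That rule is \({}_{i+1}u_{w_{i+1}}=\sum_{j\ne w_{i+1}}\frac{{}_i x_j}{{}_i S_{w_{i+1}}}{}_i u_j\), with the other coordinates unchanged, and it uses the same weights \({}_i x_j\) coming from the classical Euclid chain along \(\mfw\). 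So \(\{{}_i u\}\) is precisely the sequence of comparison points for a pair of classical Euclid trees. Since \(\mfw\) is generic, \Cref{converge to point without moving} applies. The associated total intervals \({}_i U\) are nested and shrink to a single point \(q\). In other words, \(\max_j {}_i u_j\to q\) and \(\min_j {}_i u_j\to q\).

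The second step is to show that the correction term vanishes uniformly, that is, \(\max_j \frac{k}{(n-2)\,{}_i x_j}\to 0\) as \(i\to\infty\). Along a generic sequence every direction \(j\) is mutated infinitely often. Each time \(j\) is mutated, \({}_i x_j\) is replaced by the sum of the other coordinates. A coordinate never decreases, since values are positive and mutation replaces \(x_j\) by \(S_j\ge\) the previous value along the chain. It therefore suffices to show that the newly produced values \({}_i x_{w_i}\) tend to infinity. This holds because \({}_{i+1}x_{w_{i+1}}\ge {}_i x_{w_i}+(\text{some other coordinate})\ge {}_i x_{w_i}+1\). Hence, after every direction has been mutated at least once beyond step \(N\), every coordinate is at least \(N\). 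Some care is needed if some initial coordinates are \(0\). In that case I would simply note that after finitely many steps all coordinates are positive integers, so \(\frac{k}{{}_i x_j}\) is eventually well defined and bounded by \(k\). This is the same standing convention used in \Cref{boundedness of comparison points}.

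Combining the two steps, \(|{}_i l_j-{}_i u_j|\to 0\) uniformly in \(j\). Hence \(\max_j{}_i l_j\to q\) and \(\min_j{}_i l_j\to q\), so the total intervals \({}_i L\) converge to the point \(q\). The total interval was defined using all coordinates or all but the mutated one; either convention gives the same limit, since both are squeezed between quantities tending to \(q\). Note that the \({}_iL\) themselves need not be nested when \(k>0\), because of the rightward shift \(k/S_i\). This is exactly why I pass through \({}_iu\) rather than arguing directly.

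The main obstacle I expect is the uniform divergence \({}_i x_j\to\infty\) for all \(j\) simultaneously, which is where genericity of \(\mfw\) is used a second time. I would argue it via the strict increase of the sequence of new values \({}_i x_{w_i}\), as sketched above, together with the fact that each coordinate is eventually overwritten by one of these new values.
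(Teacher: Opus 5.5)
Your proposal is correct and follows essentially the same route as the paper: pass to the auxiliary sequence ${}_iu_j={}_il_j+\frac{k}{(n-2)\,{}_ix_j}$, which obeys the $k=0$ comparison rule, apply \Cref{converge to point without moving} to it, and absorb the uniformly vanishing correction term by the triangle inequality. Your argument that $\min_j {}_ix_j\to\infty$ makes explicit a step the paper leaves implicit: the newly produced values increase strictly, and genericity overwrites every coordinate after any given time. The side remark that coordinates never decrease can fail at the first overwrite of an initial coordinate (e.g.\ $(10,1,1)\mapsto(2,1,1)$), but your conclusion does not depend on it.
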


\begin{proof}
  For any $t \in \mbN$, we obtain a comparison point $({}_t l_1, {}_t l_2, \dots, {}_t l_n)$, and then consider the mutations after $t$.
  
  With respect to the sequence $[w_{t+1}, w_{t+2}, \dots]$,  recall the sequence  $\{({}_i u_1, {}_i u_2, \dots, {}_i u_n)\}_{i=t}^{\infty}$ defined as the follows:
  \begin{align*}
    {}_i u_j \coloneqq {}_i l_j + \dfrac{k}{(n-2)\times {}_i x_j}.
  \end{align*}
  We have the following inequality:
  \begin{align*}
    | {}_i u_j - {}_i l_j | = \dfrac{k}{(n-2)\times {}_i x_j} < \dfrac{k}{(n-2)\times {}_t x_j}
  \end{align*}
  for every $i > t$.
  
  Fix \(\varepsilon>0\). 
	Choose \(t\) such that
  \[
  \frac{2k}{(n-2)\,{}_t x_{\min}}<\frac{\varepsilon}{2}.
  \]
 From the proof of \Cref{boundedness of comparison points}, we know that 
\begin{align*}
	\lim_{i \to \infty} ({}_i u_{\max} - {}_i u_{\min}) = 0.
\end{align*}
  Hence, there exists \(N\ge t\) such that for all \(i\ge N\),
  \[
  {}_i u_{\max}-{}_i u_{\min}<\frac{\varepsilon}{2}.
  \]
  For each \(i\ge N\), choose indices \(s_i,t_i\) such that ${}_i l_{s_i} \coloneqq {}_i l_{\max}$ and ${}_i l_{t_i} \coloneqq {}_i l_{\min}$.
  Then, we obtain that 
  \begin{align*}
  	|{}_i L|
  	&=
  	{}_i l_{\max}-{}_i l_{\min}\\
  	&\le
  	|{}_i l_{s_i}-{}_i u_{s_i}|
  	+
  	|{}_i u_{s_i}-{}_i u_{t_i}|
  	+
  	|{}_i u_{t_i}-{}_i l_{t_i}|\\
  	&\le
  	\frac{k}{(n-2)\,{}_t x_{\min}}
  	+
  	({}_i u_{\max}-{}_i u_{\min})
  	+
  	\frac{k}{(n-2)\,{}_t x_{\min}}\\
  	&<
  	\frac{\varepsilon}{2}+\frac{\varepsilon}{2}
  	=
  	\varepsilon.
  \end{align*}
  Therefore, the sequence of total intervals $\{{}_i L\}_{i=1}^{\infty}$ converges to a point.
\end{proof}

\begin{remarkx}
  As before, the above proof also works when $k \geq 0$ varies at each mutation, provided that $k$ is bounded below by some constant $k_a \geq 0$.
  In other words, for any generic mutation chain $\mfw \in \mbT_n$, we associate to it a sequence of numbers $\{k_i\}_{i=1}^{\infty}$ bounded below by $k_a$, and bounded above by $k_c$.

  Indeed, after $t$-th step, denote by $\{({}_i l_1, {}_i l_2, \dots, {}_i l_n)\}_{i = t}^{\infty}$ the sequence of points associated with $[w_{t+1}, w_{t+2}, \dots]$ under the mutation rule
  \begin{align}
    ({}_i l_1, {}_i l_2, \dots, {}_i l_n)  \xrightarrow{\Delta_{w_i}}  ({}_i l_1, \dots, {}_i l_{w_i - 1}, \sum_{j \neq w_i} \dfrac{{}_i x_j}{{}_i S_{w_i}} \times {}_i l_j + \dfrac{k_i}{{}_i S_{w_i}}, {}_i l_{w_i + 1}, \dots, {}_i l_n).
  \end{align}

  Let $\{({}_i l_1^{\prime}, {}_i l_2^{\prime}, \dots, {}_i l_n^{\prime})\}_{i = t}^{\infty}$ be the sequence of points with constant right-shift parameter $k=k_a$:
  \begin{align}
    ({}_i l_1^{\prime}, {}_i l_2^{\prime}, \dots, {}_i l_n^{\prime})  \xrightarrow{\Delta_{w_i}}  ({}_i l_1^{\prime}, \dots, {}_i l_{w_i - 1}^{\prime}, \sum_{j \neq w_i} \dfrac{{}_i x_j}{{}_i S_{w_i}} \times {}_i l_j^{\prime} + \dfrac{k_a}{{}_i S_{w_i}}, {}_i l_{w_i + 1}^{\prime}, \dots, {}_i l_n^{\prime}).
  \end{align}

	Let $\{({}_i l_1^{\prime \prime}, {}_i l_2^{\prime \prime}, \dots, {}_i l_n^{\prime \prime})\}_{i = t}^{\infty}$ be the sequence of points with constant right-shift parameter $k=k_c$:
  \begin{align}
    ({}_i l_1^{\prime \prime}, {}_i l_2^{\prime \prime}, \dots, {}_i l_n^{\prime \prime})  \xrightarrow{\Delta_{w_i}}  ({}_i l_1^{\prime \prime}, \dots, {}_i l_{w_i - 1}^{\prime \prime}, \sum_{j \neq w_i} \dfrac{{}_i x_j}{{}_i S_{w_i}} \times {}_i l_j^{\prime \prime} + \dfrac{k_c}{{}_i S_{w_i}}, {}_i l_{w_i + 1}^{\prime \prime}, \dots, {}_i l_n^{\prime \prime}).
  \end{align}

  Note that we have $({}_t l_1, {}_t l_2, \dots, {}_t l_n) = ({}_t l_1^{\prime}, {}_t l_2^{\prime}, \dots, {}_t l_n^{\prime}) = ({}_t l_1^{\prime \prime}, {}_t l_2^{\prime \prime}, \dots, {}_t l_n^{\prime \prime})$.

	Define the sequence of points $\{({}_i u_1^{\prime}, {}_i u_2^{\prime}, \dots, {}_i u_n^{\prime})\}_{i=t}^{\infty}$ by
  \begin{align}
    {}_i u_j^{\prime } \coloneqq {}_i l_j^{\prime} + \dfrac{k_a}{(n-2)\times {}_i x_j}.
  \end{align}

  Again, define the sequence of points $\{({}_i u_1^{\prime \prime}, {}_i u_2^{\prime \prime}, \dots, {}_i u_n^{\prime \prime})\}_{i=t}^{\infty}$ by
  \begin{align}
    {}_i u_j^{\prime \prime} \coloneqq {}_i l_j^{\prime \prime} + \dfrac{k_c}{(n-2)\times {}_i x_j}.
  \end{align}

  Since ${}_i l_j^{\prime} \leq {}_i l_j \leq {}_i l_j^{\prime \prime}$ for any $i > t$ and $1 \leq j \leq n$, we have
  \begin{align}
    | {}_i u_j^{\prime \prime} - {}_i l_j | \leq | {}_i u_j^{\prime \prime} - {}_i l_j^{\prime} | \leq | {}_i u_j^{\prime \prime} - {}_i u_j^{\prime} | + | {}_i u_j^{\prime} - {}_i l_j^{\prime} | \notag \\
		= \dfrac{k_c - k_a}{(n-2)\times {}_t x_j} + \dfrac{k_a}{(n-2)\times {}_i x_j} < \dfrac{k_c}{(n-2)\times {}_t x_j}
  \end{align}
  for every $i > t$.

  Thus the strategy of the proof of the preceding corollary still applies.
  Hence the sequence of lengths of total intervals $\{ |{}_i L| \}_{i=1}^{\infty}$ converges to $0$.
\end{remarkx}

\subsection{Asymptotic behavior of $k$-deformed Euclid trees}

For later use, we need the following definition to state the theorem in full generality.

\begin{definitionx}
  Let $\mfw \in \mbT_n$ be a generic sequence, namely $\mfw = [w_1, w_2, \dots]$.
  Recall that a sequence of points associated with $\mfw$ in the classical Euclid tree $\mcE_{n}$ is denoted by:
  \begin{align*}
    ({}_0 x_1,{}_0 x_2,\dots,{}_0 x_n) \xrightarrow{\mcM_{w_1;0}} ({}_1 x_1, {}_1 x_2, \dots, {}_1 x_n) \xrightarrow{\mcM_{w_2;0}} ({}_2 x_1, {}_2 x_2, \dots, {}_2 x_n) \xrightarrow{\mcM_{w_3;0}} \cdots
  \end{align*}

  Associate to $\mfw$ a sequence of numbers $\{k_i\}_{i=1}^{\infty}$ bounded by some constant $k_c > 0$.
  We then define a sequence of points associated with $\mfw$ by
  \begin{align*}
    ({}_0 y_1,{}_0 y_2,\dots,{}_0 y_n) \xrightarrow{\mcM_{w_1;k_1}} ({}_1 y_1, {}_1 y_2, \dots, {}_1 y_n) \xrightarrow {\mcM_{w_2;k_2}} ({}_2 y_1, {}_2 y_2, \dots, {}_2 y_n) \xrightarrow{\mcM_{w_3;k_3}} \cdots   
  \end{align*}
    where the mutation rule $\mcM_{w_i;k_i}$ is given by
  \begin{align}
    ({}_i y_1, {}_i y_2, \dots, {}_i y_n)  \xrightarrow{\mcM_{w_{i+1};k_{i}}}  ({}_i y_1, \dots, {}_i y_{w_{i+1} - 1}, \sum_{j \neq w_{i+1}} {}_i y_j + k_{i}, {}_i y_{w_{i+1} + 1}, \dots, {}_i y_n).
  \end{align}

  We do not require the initial point $({}_0 y_1,{}_0 y_2,\dots,{}_0 y_n)$ to be the same as the initial point $({}_0 x_1,{}_0 x_2,\dots,{}_0 x_n)$.

   We call such a sequence of points $\{({}_i y_1, {}_i y_2, \dots, {}_i y_n)\}_{i = 0}^{\infty}$ the \emph{$\{k_i\}_{i=1}^{\infty}$-deformed Euclid sequence along $\mfw$}.
\end{definitionx}

Combining \Cref{boundedness of comparison points} with \Cref{convergence of total interval}, we conclude this section with the following theorem.

\begin{theorem}\label{q times of deformed Euclid}
  Let $\mfw = [w_1, w_2, \dots, w_t, \dots] \in \mbT_n$ be a generic mutation sequence, and associate it with a sequence of positive real numbers $\{k_i\}_{i=1}^{\infty}$ bounded above by some constant $k_c > 0$.
  Then there exists a real number $q$ such that the $\{k_i\}_{i=1}^{\infty}$-deformed Euclid sequence along $\mfw$ is asymptotic to $q$ times the corresponding classical Euclid sequence.
\end{theorem}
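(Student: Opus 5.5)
We read ``asymptotic to $q$ times'' as: the coordinatewise ratios ${}_i y_j/{}_i x_j$ all converge to one common number $q$. Equivalently, the comparison points $({}_i l_1,\dots,{}_i l_n)$, with ${}_i l_j={}_i y_j/{}_i x_j$, converge to the diagonal point $(q,\dots,q)$. The plan is to work entirely in the comparison tree $\mcL_n$ and combine the two preceding results, using the versions for variable shifts $k_i$ recorded in the remarks after them.

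First, the comparison points obey the variable-shift rule
\[
{}_{i+1}l_{w_{i+1}}=\sum_{j\ne w_{i+1}}\frac{{}_i x_j}{{}_i S_{w_{i+1}}}\,{}_i l_j+\frac{k_{i+1}}{{}_i S_{w_{i+1}}},
\]
as in \eqref{mutation in comparison tree in general}, with $0\le k_i\le k_c$. By \Cref{boundedness of comparison points} and the remark after it, the sequence $\{({}_i l_1,\dots,{}_i l_n)\}$ is bounded. By \Cref{convergence of total interval} and its remark, applied with $k_a=0$, the length $|{}_iL|={}_i l_{\max}-{}_i l_{\min}$ tends to $0$.

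Second, we show that ${}_i l_{\min}$ and ${}_i l_{\max}$ themselves converge, not merely their difference. Since each new coordinate is a weighted average of the others plus the shift $k_{i+1}/{}_iS_{w_{i+1}}\ge 0$, the minimum can never drop: ${}_{i+1}l_{\min}\ge {}_i l_{\min}$. This makes $\{{}_i l_{\min}\}$ nondecreasing. It is also bounded above by the first step, so it converges to some $q$. Because $|{}_iL|\to0$, we get ${}_i l_{\max}\to q$ as well, and hence every coordinate ${}_i l_j\to q$.

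This is the one genuinely new step, and it is where I expect the main care to be needed. It rests on the monotonicity of the minimum, which in turn uses $k_i\ge 0$; this holds here since the $k_i$ are positive.

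Finally, we translate this back to the trees. Convergence ${}_i l_j\to q$ gives ${}_i y_j/{}_i x_j\to q$ for each $j$, which is the claimed asymptotic statement. This holds in particular along the newly mutated coordinates, ${}_i y_{w_i}/{}_i x_{w_i}\to q$.

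Genericity of $\mfw$ ensures that every ${}_i x_j\to\infty$, since each coordinate is eventually mutated and the Euclid sequences grow. So the statement concerns genuinely growing sequences. Additionally, $q\ge {}_0 l_{\min}$. If all initial $y$'s are positive, then ${}_0 l_{\min}>0$, so $q>0$ and the statement is non-degenerate.
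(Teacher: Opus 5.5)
Your proposal is correct and follows the paper's route: the paper obtains this theorem simply by combining the boundedness of the comparison points (\Cref{boundedness of comparison points}) with the shrinking of the total interval (\Cref{convergence of total interval}), in their variable-$k_i$ forms from the accompanying remarks. Your extra observation that ${}_i l_{\min}$ is nondecreasing (because $k_i\ge 0$) is a worthwhile addition. The paper's corollary proof only establishes $|{}_iL|\to 0$, and boundedness together with vanishing length would not by itself force the coordinates to converge to a single limit $q$.
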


\section{Logarithmic asymptotic behavior of generalized Markov-Hurwitz equations}

In this section, following the steps in our previous work \cite[Section 6]{CJ25},
we show that after taking logarithms, the generalized Markov-Hurwitz tree converges to the classical $n$-branched Euclid tree up to a scalar multiplication.

\subsection{Ratio number sequence}
\label{ratio number sequence}
In this subsection, we introduce the ratio number sequence, which characterizes the asymptotic behavior of generalized Markov-Hurwitz points.

We begin with an example in which, after $t$-th step for some $t \in \mbN$, the mutation sequence is given by $[\dots,w_t,1,2,\dots]$.

At $t$-th step, let $({}_t x_1, {}_t x_2, \dots, {}_t x_n)$ be a solution to the generalized Markov-Hurwitz equations (\ref{eq: generalized Markov Hurwitz equations}).
At $(t+1)$-st step, following the mutation chain $[\dots,w_t,1,2,\dots]$, we mutate the first coordinate and then obtain the new point $({}_{t+1} x_1, {}_{t+1} x_2, \dots, {}_{t+1} x_n)$, where
\begin{align}
    {}_{t+1} x_1 = \dfrac{({}_t x_2^2 + \cdots + {}_t x_n^2) + \lambda_1 \cdot {}_t x_2 \cdots {}_t x_n}{{}_t x_1}, \quad {}_{t+1} x_2 = {}_t x_2, \quad \dots, \quad {}_{t+1} x_n = {}_t x_n.
\end{align}
We define the ratio number $k_{t+1}$ by
\begin{align}
    k_{t+1} = \dfrac{{}_{t+1} x_1}{\prod_{i \neq 1} {}_t x_i}.
\end{align}
That is to say, the ratio number is the newly mutated coordinate divided by the product of the remaining coordinates at the previous step.

Next, at $(t+2)$-nd step, we mutate the second coordinate of $({}_{t+1} x_1, {}_{t+1} x_2, \dots, {}_{t+1} x_n)$ and obtain the point $({}_{t+2} x_1, {}_{t+2} x_2, \dots, {}_{t+2} x_n)$, where ${}_{t+2} x_i = {}_{t+1} x_i$ for $i \neq 2$, and
\begin{align}
  {}_{t+2} x_2 = \dfrac{({}_{t+1} x_1^2 + {}_{t+1} x_3^2 + \cdots + {}_{t+1} x_n^2) + \lambda_2 \cdot {}_{t+1} x_1 \cdot {}_{t+1} x_{3} \cdot {}_{t+1} x_{4} \cdots {}_{t+1} x_n}{{}_{t+1} x_2}.
\end{align}

Similarly, the new ratio number $k_{t+2}$ is then defined by 
{\small
\begin{align}
    k_{t+2} &= \dfrac{{}_{t+2} x_2}{\prod_{i \neq 2} {}_{t+1} x_i} = \dfrac{({}_{t+1} x_1^2 + {}_{t+1} x_3^2 + \cdots + {}_{t+1} x_n^2) + \lambda_2 \cdot {}_{t+1} x_1 \cdot {}_{t+1} x_{3} \cdot {}_{t+1} x_{4} \cdots {}_{t+1} x_n}{{}_{t+1} x_1 \cdot \prod_{i \neq 1} {}_{t+1} x_i } \notag \\
    &= \dfrac{(k_{t+1} \cdot \prod_{i \neq 1} {}_t x_i)^2 + {}_{t+1} x_3^2 + \cdots + {}_{t+1} x_n^2 + \lambda_2 \cdot (k_{t+1} \cdot \prod_{i \neq 1} {}_t x_i) \cdot {}_{t+1} x_{3} \cdot {}_{t+1} x_{4} \cdots {}_{t+1} x_n}    {(k_{t+1} \cdot \prod_{i \neq 1} {}_t x_i) \cdot \prod_{i \neq 1} {}_t x_i} \notag \\
    &= k_{t+1} + \dfrac{{}_{t+1} x_3^2 + \cdots + {}_{t+1} x_n^2}   {\prod_{i \neq 1} {}_{t+1} x_i^2} \cdot \dfrac{1}{k_{t+1}} + \lambda_2 \cdot \dfrac{1}{{}_{t+1} x_2}.
\end{align}}

Thus, to any infinite generic mutation sequence $\mfw$ with $|\mfw|=+\infty$, we can associate a number sequence $\{k_j\}_{j=1}^{+\infty}$.
We call this sequence the \emph{ratio number sequence}.
Starting from the initial solution $(1,1,\dots,1)$, one can prove by induction that $k_i> 0$ for every $i\in \mbN$.
Moreover, the ratio sequence becomes stable when the number of mutations is sufficiently large. That is to say, $k_i \approx k_{i+1}$ for $i\gg 0$.

The following lemma follows from a direct calculation and induction.

\begin{lemma}\label{lem: ratio increase}
	The ratio number sequence $\{k_j\}_{j=1}^{+\infty}$ is a strictly increasing sequence.
\end{lemma}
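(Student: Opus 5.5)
The plan is to derive a general one-step recursion for the ratio numbers, extending the two-step computation of $k_{t+2}$ carried out just above, and then read off monotonicity from it.

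Fix $t\ge 1$ and write $a:=w_t$ and $b:=w_{t+1}$, so that $a\neq b$. Abbreviate ${}_t x_i$ by $x_i$ and put $P:=\prod_{i=1}^n x_i$. By definition of $\mu_b$,
\[
k_{t+1}=\frac{{}_{t+1}x_b}{\prod_{i\neq b}x_i}
=\frac{\sum_{j\neq b}x_j^2+\lambda_b\prod_{j\neq b}x_j}{x_b\prod_{i\neq b}x_i}
=\sum_{j\neq b}\frac{x_j^2}{P}+\frac{\lambda_b}{x_b}.
\]

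The key identification concerns the $j=a$ term. Step $t$ changed only coordinate $a$, so ${}_{t-1}x_i=x_i$ for every $i\neq a$. Hence
\[
k_t=\frac{x_a}{\prod_{i\neq a}{}_{t-1}x_i}=\frac{x_a}{\prod_{i\neq a}x_i}=\frac{x_a^2}{P}.
\]
Substituting this into the displayed formula gives the recursion
\[
k_{t+1}=k_t+\sum_{j\neq a,b}\frac{x_j^2}{P}+\frac{\lambda_b}{x_b}.
\]
For $n=3$ and $(a,b)=(1,2)$ this is exactly the earlier computation of $k_{t+2}$.

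Strictness now follows. Since $n\ge 3$, the index set $\{1,\dots,n\}\setminus\{a,b\}$ is nonempty. All coordinates are positive integers, because $\mcH_{n,\lambda}$ consists of positive solutions by \Cref{lem_mu}. So the middle sum is strictly positive, and $\lambda_b/x_b\ge 0$ because $\lambda_b\ge 0$. Therefore $k_{t+1}>k_t$ for every $t\ge 1$, which is the lemma.

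The only step needing care is the bookkeeping in the identity $k_t=x_a^2/P$. One must check that the product of the unmutated coordinates is the same at steps $t-1$ and $t$, and that consecutive labels in $\mfw$ are distinct, so that $a\neq b$. No induction is needed beyond this single-step identity. Positivity of $k_j$ is automatic, since each $k_j$ is a ratio of positive integers.
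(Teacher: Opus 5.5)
Your proof is correct and is essentially the paper's argument. The paper only says the lemma follows by direct calculation and induction, generalizing its computation of $k_{t+2}$. Your one-step recursion $k_{t+1}=k_t+\sum_{j\neq a,b}x_j^2/P+\lambda_b/x_b$ is exactly that calculation carried out for arbitrary consecutive labels $a\neq b$, and $n\ge 3$ guarantees strictness. One small correction: the paper's earlier computation of $k_{t+2}$ already treats general $n$ with $(a,b)=(1,2)$, not only $n=3$.
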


Using the ratio number sequence, we obtain the following observation on the transition behavior of generalized Markov-Hurwitz points.

\begin{observation}\label{product form}
  Take any mutation sequence $\mfw = [w_1, w_2, \dots] \in \mbT_n$ (not necessarily generic), and associate it with the sequence of generalized Markov-Hurwitz points $\{({}_i x_1, {}_i x_2, \dots, {}_i x_n)\}_{i=0}^{\infty}$,
  starting at the initial solution $({}_0 x_1, {}_0 x_2, \dots, {}_0 x_n) = (1,1,\dots,1)$.
  Then, there exists a ratio number sequence $\{k_{i}\}_{i=1}^{\infty}$, such that the mutation $\mu_{w_{i+1}}$ of $({}_i x_1, {}_i x_2, \dots, {}_i x_n)$ is of the form
  \begin{align}
    \mu_{w_{i+1}} ({}_i x_1, {}_i x_2, \dots, {}_i x_n) = ({}_i x_1, {}_i x_2, \dots, {}_i x_{w_{i+1}-1}, k_{i+1} \cdot \prod_{j \neq w_{i+1}} {}_i x_j, {}_i x_{w_{i+1}+1}, \dots, {}_i x_n).
  \end{align}
\end{observation}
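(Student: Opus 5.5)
The observation is essentially definitional, so the plan is to define $k_{i+1}$ directly and then check the properties claimed for it. For each $i\ge 0$ set
\[
  k_{i+1} \coloneqq \frac{{}_{i+1}x_{w_{i+1}}}{\prod_{j\neq w_{i+1}} {}_i x_j},
\]
where ${}_{i+1}x_{w_{i+1}}$ is the newly mutated coordinate of $\mu_{w_{i+1}}({}_i x_1,\dots,{}_i x_n)$. The displayed form of $\mu_{w_{i+1}}$ then holds tautologically: every coordinate other than the $w_{i+1}$-th is unchanged, and the $w_{i+1}$-th equals $k_{i+1}\prod_{j\ne w_{i+1}}{}_ix_j$.

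Three points need checking.

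\emph{Well-definedness.} The denominator must be nonzero. By \Cref{lem_mu} and induction from $(1,\dots,1)$, every ${}_i\mfx$ is a positive integer solution of (\ref{eq: generalized Markov Hurwitz equations}). Hence all ${}_ix_j\ge 1$ and the denominator is at least $1$.

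\emph{Positivity.} The numerator ${}_{i+1}x_{w_{i+1}}$ is a positive integer, so $k_{i+1}>0$.

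\emph{Explicit formula.} This is useful later and reproduces the recursion computed above for the path $[\dots,1,2,\dots]$. Substitute the defining formula for $x_m'$ from \Cref{lem_mu} with $m=w_{i+1}$ to get
\[
  k_{i+1}=\frac{\sum_{j\ne m}{}_ix_j^2+\lambda_m\prod_{j\ne m}{}_ix_j}{\prod_{j=1}^n {}_ix_j}.
\]
Alternatively, use the Vieta sum from the proof of \Cref{lem_mu}:
\[
  k_{i+1}=n+\sum_{r}\lambda_r-\sum_{r\ne m}\frac{\lambda_r}{{}_ix_r}-\frac{{}_ix_m}{\prod_{j\ne m}{}_ix_j}.
\]
The second form makes it clear that $k_{i+1}$ is bounded above by $n+\sum_r\lambda_r$. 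That bound is exactly the hypothesis needed to apply \Cref{q times of deformed Euclid} after taking logarithms, and I would record it here.

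The only mildly delicate step is the first mutation from $(1,\dots,1)$, and the case where $\mfw$ is not generic. Neither causes a problem: the construction is purely local at each step and never uses the generic assumption. The main thing to get right is making sure positivity of all coordinates is established before dividing.
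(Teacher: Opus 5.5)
Your proposal is correct and matches the paper's argument, which is purely definitional: $k_{i+1}$ is defined as the newly mutated coordinate divided by $\prod_{j\neq w_{i+1}}{}_ix_j$, so the displayed form is tautological, and your well-definedness check, positivity check and both explicit formulas are accurate (your Vieta form is equivalent to the one used in the proof of \Cref{ratio number convergence}). One small correction to your aside: applying \Cref{q times of deformed Euclid} to $\widebar{k_{i+1}}=\log k_{i+1}$ also needs $k_{i+1}>1$, not only the upper bound $k_{i+1}<n+\sum_r\lambda_r$; this holds because along a path ${}_ix_{w_{i+1}}\le\prod_{j\neq w_{i+1}}{}_ix_j$ (by \Cref{mutate to max} and induction), so your second formula gives $k_{i+1}\ge n-1+\lambda_{w_{i+1}}\ge 2$.
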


\subsection{Logarithmic generalized Markov-Hurwitz trees}

We have seen in the previous subsection that, for the mutation sequences to be sufficiently long, mutations of generalized Markov-Hurwitz points are asymptotically multiplicative.

It is therefore natural to investigate the behavior of mutations after taking logarithms of the generalized Markov-Hurwitz tree, replacing each generalized Markov-Hurwitz point $(x_1, x_2, \dots, x_n)$ by $(\log(x_1), \log(x_2), \dots, \log(x_n))$.
We call the resulting tree the \emph{logarithmic generalized Markov-Hurwitz tree}.

We will compare this logarithmic generalized Markov-Hurwitz tree with the $k$-deformed $n$-branched Euclid tree.

For brevity, we denote by $\widebar{x} \coloneqq \log(x)$ for any positive real number $x$. 
Therefore, we have 
\begin{align}
 	(\widebar{x_1},\widebar{x_2},\dots, \widebar{x_n})\coloneqq (\log(x_1),\log(x_2),\dots, \log(x_n)).
 \end{align}
Take any mutation chain in the generalized Markov-Hurwitz tree.
For example, suppose that the mutation chain is
{\small
\begin{align}
  ({}_0 x_1, {}_0 x_2, \dots, {}_0 x_n) \xrightarrow{\mu_{i_1}} ({}_1 x_1, {}_1 x_2, \dots, {}_1 x_n) \xrightarrow{\mu_{i_2}} ({}_2 x_1, {}_2 x_2, \dots, {}_2 x_n) \xrightarrow{\mu_{i_3}} ({}_3 x_1, {}_3 x_2, \dots, {}_3 x_n) \cdots 
\end{align}} 
Then the corresponding mutation chain in the logarithmic generalized Markov-Hurwitz tree can be written as
{\small
\begin{align}
  (\widebar{{}_0 x_1}, \widebar{{}_0 x_2}, \dots, \widebar{{}_0 x_n}) \xrightarrow{\widebar{\mu_{i_1}}} (\widebar{{}_1 x_1},\widebar{{}_1 x_2}, \dots, \widebar{{}_1 x_n}) \xrightarrow{\widebar{\mu_{i_2}}} (\widebar{{}_2 x_1},\widebar{{}_2 x_2}, \dots, \widebar{{}_2 x_n}) \xrightarrow{\widebar{\mu_{i_3}}} (\widebar{{}_3 x_1},\widebar{{}_3 x_2}, \dots, \widebar{{}_3 x_n}) \cdots 
\end{align}}
Here, we replace $\mu_i$ by $\widebar{\mu_i}$ to denote the mutation in the logarithmic generalized Markov-Hurwitz tree.

Indeed, given any sequence of generalized Markov-Hurwitz points $\{({}_i x_1,{}_i x_2, \dots, {}_i x_n)\}_{i=0}^{\infty}$ associated with a mutation sequence $\mfw$, the definition of the ratio number sequence implies that, if at time $t$ we mutate in direction $j$, then
{\small
\begin{align}
  ({}_{t+1} x_{1}, {}_{t+1} x_{2}, \dots, {}_{t+1} x_{n}) = \mu_j({}_t x_1, {}_t x_2, \dots, {}_t x_n) = ({}_t x_1, \dots, {}_t x_{j-1}, k_{t+1} \cdot \prod_{i \neq j} {}_t x_i, {}_t x_{j+1}, \dots, {}_t x_n),
\end{align}} 
which implies that 
\begin{align}
  (\widebar{{}_{t+1} x_{1}}, \dots, \widebar{{}_{t+1} x_{n}})= (\log({}_{t+1} x_{1}), \dots, \log({}_{t+1} x_{n})) \notag \\
  = (\widebar{{}_t x_1}, \dots, \widebar{{}_t x_{j-1}}, \sum_{i \neq j} \widebar{{}_t x_i} + \widebar{k_{t+1}}, \widebar{{}_t x_{j+1}}, \dots, \widebar{{}_t x_n}).
\end{align} 
Hence, at $t$-th step for any $t \in \mbN$, the mutation $\widebar{\mu_j}$ can be written as
\begin{align}
  \widebar{\mu_j}(\widebar{{}_{t+1} x_{1}}, \dots, \widebar{{}_{t+1} x_{n}}) = (\widebar{{}_t x_1}, \dots, \widebar{{}_t x_{j-1}}, \sum_{i \neq j} \widebar{{}_t x_i} + \widebar{k_{t+1}}, \widebar{{}_t x_{j+1}}, \dots, \widebar{{}_t x_n}),
\end{align}
for any $1 \leq j \leq n$.

Thus the logarithmic mutation rule may be viewed as a deformed Euclid mutation, whose deformation parameter at the $(t+1)$-st step is \(\widebar{k_{t+1}}\).
To apply the asymptotic result for deformed Euclid trees, it remains to understand the limiting behavior of the ratio number sequence \(\{k_i\}_{i=1}^{\infty}\).
The next theorem gives this convergence, and identifies the limit in the generic case.

\begin{theorem}\label{ratio number convergence}
  For any mutation sequence $\mfw = [w_1, w_2, \dots] \in \mbT_n$, the associated ratio number sequence $\{k_i\}_{i=1}^{\infty}$ converges to a real number.
	Moreover, if $\mfw$ is generic, then the associated ratio number sequence $\{k_i\}_{i=1}^{\infty}$ converges to $k_{\lambda} = n + \displaystyle \sum_{i=1}^{n} \lambda_i$.
\end{theorem}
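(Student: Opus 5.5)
The plan is to get a closed formula for each ratio number from Vieta's relation in Lemma \ref{lem_mu}, and then read off both monotone boundedness and the limit from it.

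\emph{Step 1: closed formula.} Suppose that at step $t+1$ we mutate in direction $j=w_{t+1}$. Put $p_j=\prod_{i\neq j}{}_t x_i$. Every point of the chain is a solution of (\ref{eq: generalized Markov Hurwitz equations}), so the proof of Lemma \ref{lem_mu} gives
\[
{}_t x_j+{}_{t+1}x_j=\Bigl(n+\sum_{i=1}^n\lambda_i-\sum_{i\neq j}\frac{\lambda_i}{{}_t x_i}\Bigr)p_j .
\]
Dividing by $p_j$ yields
\[
k_{t+1}=k_\lambda-\sum_{i\neq j}\frac{\lambda_i}{{}_t x_i}-\frac{{}_t x_j}{p_j},\qquad k_\lambda=n+\sum_{i=1}^n\lambda_i .
\]
Both subtracted terms are nonnegative, so $k_{t+1}< k_\lambda$ for every $t$. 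Lemma \ref{lem: ratio increase} says the sequence $\{k_i\}$ is strictly increasing. A bounded increasing sequence converges, which proves the first assertion for an arbitrary $\mfw$.

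\emph{Step 2: growth along a generic path.} From now on assume $\mfw$ is generic. I want to show that every coordinate ${}_t x_i$ tends to $+\infty$, and that the coordinate being mutated is never the largest one.
\begin{itemize}
\item After the first step, the last mutated coordinate is the unique maximum. For $t\ge1$ we have $w_{t+1}\neq w_t$, so the coordinate mutated at step $t+1$ is not that maximum.
\item Proposition \ref{mutate to max} then makes the new value strictly larger than the old maximum. Hence the running maxima form a strictly increasing sequence of integers and tend to infinity.
\item The value of coordinate $i$ at time $t$ equals the value it received at its last mutation time $\tau_i(t)$. That value is at least the maximum at time $\tau_i(t)-1$.
\item Genericity forces $\tau_i(t)\to\infty$ as $t\to\infty$, so ${}_t x_i\to\infty$ for every $i$.
\end{itemize}

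\emph{Step 3: the error terms vanish.} The term $\sum_{i\neq j}\lambda_i/{}_t x_i$ tends to $0$ by Step 2. For the other term, let $m=w_t$ be the index of the current maximum, which differs from $j$. Then
\[
\frac{{}_t x_j}{p_j}\le\frac{{}_t x_m}{p_j}=\frac{1}{\prod_{i\neq j,m}{}_t x_i}.
\]
Since $n\ge3$, this product is nonempty, and by Step 2 it tends to infinity. Therefore $k_{t+1}\to k_\lambda$.

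The main obstacle is Step 2, namely making rigorous that all coordinates, not just the maximum, tend to infinity. This needs careful bookkeeping of the last mutation times using Proposition \ref{mutate to max} together with genericity. The initial tie at $(1,\dots,1)$ causes no problem, since the argument only starts from $t\ge1$.
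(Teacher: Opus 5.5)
Your proposal is correct and follows the paper's proof. You use the same closed formula for $k_{t+1}$ (you derive it from the Vieta sum rather than by substituting the equation), the same monotone-and-bounded argument via Lemma~\ref{lem: ratio increase} with $k_{t+1}<k_\lambda$, and the same use of Proposition~\ref{mutate to max} plus genericity to make the error term vanish. Your Step~2 and the bound ${}_t x_j/p_j\le 1/\prod_{i\neq j,m}{}_t x_i$ (using $n\ge 3$) make rigorous what the paper asserts only briefly.
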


\begin{proof}
  First, by \Cref{lem: ratio increase}, for any mutation sequence $\mfw$, the associated ratio number sequence $\{k_i\}_{i=1}^{\infty}$ is strictly increasing.

  Assume that $(x_1, x_2, \dots, x_n)$ is an arbitrary positive integer solution of the generalized Markov-Hurwitz equations (\ref{eq: generalized Markov Hurwitz equations}).
  For any mutation $\mu_j$ of $(x_1, x_2, \dots, x_n)$, the ratio number $k$ is given by
  \begin{align}
    k &= \dfrac{\mu_j(x_1, x_2, \dots, x_n)_j}{\prod_{i \neq j} x_i} = \dfrac{(\sum_{i \neq j} x_i^2) + \lambda_j \cdot \prod_{i \neq j} x_i}   {\prod_{i=1}^{n} x_i} \notag \\
      &= (n + \displaystyle \sum_{i=1}^{n} \lambda_i) - \dfrac{x_j^2 + \sum_{i \neq j} (\lambda_i \cdot \prod_{m \neq i} x_m) }    {\prod_{i=1}^{n} x_i}. 
  \end{align}
  From this formula, $k_i \leq n + \displaystyle \sum_{j=1}^{n} \lambda_j$ for any $i \in \mbN$, so the ratio number sequence $\{k_i\}_{i=1}^{\infty}$ is bounded.
  Hence, by the monotone convergence theorem, the ratio number sequence $\{k_i\}_{i=1}^{\infty}$ converges to a real number.

  Second, if the mutation sequence $\mfw$ is generic, then every coordinate $x_1, x_2, \dots, x_n$ tends to infinity as the number of mutations tends to infinity.
  Since the previous mutation can not be $\mu_j$, from \Cref{mutate to max}, $x_j$ is not the maximal one compared to other coordinates.
  Therefore, the term
  \begin{align}
    \dfrac{x_j^2 + \sum_{i \neq j} (\lambda_i \cdot \prod_{m \neq i} x_m) }    {\prod_{i=1}^{n} x_i}
  \end{align}
  tends to $0$, which implies that the ratio number sequence $\{k_i\}_{i=1}^{\infty}$ converges to $k_{\lambda} = n + \displaystyle \sum_{i=1}^{n} \lambda_i$.
\end{proof}

We now arrive at the main result of this section: the logarithmic generalized Markov-Hurwitz tree converges to the $n$-branched Euclid tree up to scalar multiplication.

\begin{theorem}\label{thm: logarithmic asymptotic}
  For any generic mutation sequence $\mfw = [w_1, w_2, \dots, w_t, \dots] \in \mbT_n$, there exists a real number $q$ such that the logarithmic generalized Markov-Hurwitz chain along $\mfw$ is asymptotic to $q$ times the corresponding classical $n$-branched Euclid chain as $t$ tends to infinity.
\end{theorem}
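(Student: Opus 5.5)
The proof reduces the statement to \Cref{q times of deformed Euclid}. Fix a generic sequence $\mfw$ and let $\{({}_t x_1,\dots,{}_t x_n)\}_{t\ge 0}$ be the generalized Markov-Hurwitz chain along $\mfw$, starting at $(1,\dots,1)$. By \Cref{product form} and the computation that follows it, the logarithmic chain $\{(\widebar{{}_t x_1},\dots,\widebar{{}_t x_n})\}_{t\ge 0}$ obeys
\[
\widebar{{}_{t+1}x_{w_{t+1}}}=\sum_{i\neq w_{t+1}}\widebar{{}_t x_i}+\widebar{k_{t+1}},
\]
with all other coordinates unchanged. This is exactly a $\{\widebar{k_i}\}_{i=1}^\infty$-deformed Euclid sequence along $\mfw$ with initial point $(0,\dots,0)$. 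To apply \Cref{q times of deformed Euclid} we need the parameters $\widebar{k_i}=\log k_i$ to be positive and bounded above.

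For the upper bound, \Cref{ratio number convergence} gives $k_i\le k_\lambda=n+\sum\lambda_j$, so $\widebar{k_i}\le \log k_\lambda=:k_c$. Positivity is the subtle point, since $k_i$ could a priori lie in $(0,1]$.

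\begin{itemize}
\item By \Cref{ratio number convergence}, $k_i\to k_\lambda\ge n\ge 3$. Hence there is $T$ with $k_i>1$ for all $i>T$. By \Cref{lem: ratio increase}, $k_i\ge k_1$ for all $i$, so $\widebar{k_i}\ge \log k_1$, and the finitely many early terms are bounded below in any case.
\item I would restart the chain at step $T$. The point $(\widebar{{}_T x_1},\dots,\widebar{{}_T x_n})$ becomes the new initial point, and the tail $[w_{T+1},w_{T+2},\dots]$ (still generic) becomes the new sequence. From then on all parameters lie in $(0,k_c]$.
\item The deformed-Euclid results were stated for initial points in $\mbZ_{\ge0}^n$, but the proofs of \Cref{boundedness of comparison points}, \Cref{convergence of total interval} and the remark with varying $k_i$ only use nonnegativity and the averaging structure. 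They therefore apply to nonnegative real initial points. Here $\widebar{{}_T x_j}\ge 0$ because ${}_T x_j\ge 1$.
\item The classical Euclid comparison chain in that theorem may start anywhere; I choose it to start at the point ${}_T e$ of $\mcE_n$ along $\mfw$ at step $T$, whose entries are $\ge 1$. This gives $q$ with $\widebar{{}_t x_j}/{}_t e_j\to q$ for all $j$, which is the claim.
\end{itemize}

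The main obstacle is making precise that \Cref{q times of deformed Euclid} really delivers a common limit $q$. Boundedness of the comparison numbers comes from \Cref{boundedness of comparison points} and the remark after it, and shrinking of the total interval to a point $\{q\}$ comes from \Cref{convergence of total interval} and its remark. One still has to check that the interval ${}_tL$ is eventually nested, or at least has converging endpoints. With a shift $k_i/S_i$ the intervals are not nested, but $k_i/{}_tS_{w_{t+1}}\to 0$ because ${}_t x_j\to\infty$ for generic $\mfw$. Combined with $|{}_tL|\to 0$ and boundedness, this forces the midpoints to form a Cauchy sequence.

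If that step is delicate, I would use the auxiliary sequence ${}_t u_j={}_t l_j+k_c/((n-2)\,{}_t x_j)$ from the remark. Its $k=0$-type averaging makes its total intervals nested, so they converge to a point $q$. Since $|{}_t u_j-{}_t l_j|\to 0$, every ${}_t l_j\to q$. Finally, $q>0$ whenever the chain grows, which is not needed but confirms the limit is nondegenerate.
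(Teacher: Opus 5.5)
Your reduction is the paper's proof. \Cref{product form} turns the logarithmic chain into a $\{\widebar{k_i}\}$-deformed Euclid sequence, \Cref{ratio number convergence} bounds the parameters, and \Cref{q times of deformed Euclid} is applied. The restart at step $T$ is unnecessary, though harmless, and so is the extension to real initial points that it forces on you. From $(1,\dots,1)$ one gets $k_1=n-1+\lambda_{w_1}\ge 2$, and $k_i$ increases by \Cref{lem: ratio increase}. Hence $\widebar{k_i}\in[\log 2,\log k_\lambda)$ from the very first step, and the initial point $(0,\dots,0)$ is already admissible.

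Your last two paragraphs try to verify that the cited theorem really yields a common limit, a point the paper also leaves implicit. They contain two incorrect steps.

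\begin{enumerate}
\item Boundedness, shifts $k_{t+1}/{}_tS_{w_{t+1}}\to 0$, and $|{}_tL|\to 0$ do not make the midpoints Cauchy. A bounded sequence whose increments tend to $0$ can still oscillate forever.
\item The sequence ${}_tu_j={}_tl_j+k_c/((n-2)\,{}_tx_j)$ obeys the $k=0$ rule only if every $k_{t+1}=k_c$. In general its new coordinate is the weighted average minus $(k_c-k_{t+1})/{}_tS_{w_{t+1}}$, so its intervals need not be nested. The sequence in the remark is different: it is evolved by the $k=0$ rule and only dominates ${}_tl_j+k_c/((n-2)\,{}_tx_j)$, so its distance to ${}_tl_j$ need not tend to $0$.
\end{enumerate}

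The missing observation is one-sided monotonicity. Each new coordinate is a convex combination of the others plus a nonnegative shift, so ${}_tl_{\min}$ is non-decreasing. It is bounded by \Cref{boundedness of comparison points}, so it converges to some $q$. Then $|{}_tL|\to 0$, from the remark after \Cref{convergence of total interval}, forces every ${}_tl_j\to q$.
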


\begin{proof}
  By \Cref{ratio number convergence}, the ratio number sequence $\{k_i\}_{i=1}^{\infty}$ converges to $k_{\lambda} = n + \displaystyle \sum_{i=1}^{n} \lambda_i$.
  By \Cref{product form}, taking logarithms of the generalized Markov-Hurwitz chain along $\mfw$ gives a $\{\widebar{k_i}\}_{i=1}^{\infty}$-deformed Euclid chain along $\mfw$.

  Therefore, by \Cref{q times of deformed Euclid}, there exists a real number $q$ such that the logarithmic generalized Markov-Hurwitz chain along $\mfw$ is asymptotic to $q$ times the corresponding classical $n$-branched Euclid chain as $t$ tends to infinity.
\end{proof}

\section{Further Discussions}\label{sec: conj}

We exhibit a generalized Markov-Hurwitz uniqueness conjecture, which extends the classical Markov uniqueness conjecture \cite{Fro13}, and the generalized Markov uniqueness conjecture \cite{CJ25}.
\begin{conjecture}
\label{conj: CJW} If $(x_1,x_2,\dots,x_n)$ and $(x_1,x_2^{\prime},\dots,x_n^{\prime})$ are two positive integer solutions to the generalized Markov-Hurwitz equation \eqref{eq: generalized Markov Hurwitz equations} with $x_1\geq x_2\geq \dots \geq x_n$ and $x_1\geq x_2^{\prime}\geq \dots \geq x_n^{\prime}$, then $x_2=x_2^{\prime}, x_3=x_3^{\prime},\dots, x_n=x_n^{\prime}$.
\end{conjecture}
Note that \Cref{conj: CJW} is proposed for arbitrary coefficients $\lambda_i$ with $i=1,\dots,n$. 
Hence, it is equivalent to change the position of the fixed component $x_1$. That is to say, for example,
if $(x_1,x_2,\dots,x_n)$ and $(x_1^{\prime},x_2,\dots,x_n^{\prime})$ are two positive integer solutions to the generalized Markov-Hurwitz equation \eqref{eq: generalized Markov Hurwitz equations} with $x_2\geq x_1 \geq \dots \geq x_n$ and $x_2 \geq x_1^{\prime} \geq \dots \geq x_n^{\prime}$, then $x_1=x_1^{\prime}, x_3=x_3^{\prime},\dots, x_n=x_n^{\prime}$.

\begin{remarkx}
  In \cite[Conj. 8.2]{CJ25}, an analogous uniqueness conjecture for the generalized Markov equations is proposed. 
	Here, we extend the number of variables from $3$ to $n$. On the other hand, 
	\Cref{conj: CJW} also contains the special case for the classical Markov-Hurwitz equations, which, to the best of our knowledge, has not been considered.
\end{remarkx}

We close by pointing out another possible direction: one can vary the coefficients in the family studied here.
Throughout this paper, we have focused on equations of the form
\begin{align}
 \displaystyle \sum_{i=1}^n X_i^2 + \sum_{i=1}^n \lambda_i X_1\cdots \widehat{X_i} \cdots X_n = (n + \sum_{i=1}^n \lambda_i) \prod_{i=1}^n X_i.
\end{align}

One may also consider the more general family of equations:
\begin{align}
 \displaystyle \sum_{i=1}^n X_i^2 + \sum_{i=1}^n \lambda_i X_1\cdots \widehat{X_i} \cdots X_n = (a + \sum_{i=1}^n \lambda_i) \prod_{i=1}^n X_i + b,
\end{align}
where $a,b \in \mbZ_{\geq 0}$.

In this setting, the analogue of the initial solution need not be unique (cf. \Cref{thm: generate}).
Nevertheless, by following the strategy of \cite[Proposition 18]{GMR19}, one can show that there are only finitely many such initial solutions.

Moreover, the remaining arguments of the present paper extend to this more general class of equations.



\end{document}